\def\spam{\mathop{\rm span}\nolimits}
\def\grey{\cellcolor{gray!25}}
\newcommand{\figures}{}
\def\trace{\mathop{\rm trace}\nolimits}
\def\diag{\mathop{\rm diag}\nolimits}
\def\pmat#1{\begin{pmatrix}#1\end{pmatrix}}
\def\question#1{{\bf Question: }#1}
\def\question#1{}
\def\R{\mathbb{R}}
\def\ZZ{\mathbb{Z}}
\def\Cd{\C^d}
\def\Rd{\R^d}
\def\C{\mathbb{C}}
\def\SS{\mathbb{S}}
\newcommand{\RR}{\mathbb{R}}
\newtheorem{theorem}{Theorem}[section]
\newtheorem{example}{Example}[section]
\newenvironment{proof}{{\noindent \it
Proof.}}{\hfill$\Box$\medskip}
\newif\ifdraft\def\draft{\drafttrue\hoffset=.8truecm\showlabeltrue
\def\comment##1{{\bf comment: ##1}}
\headline={\sevenrm \hfill \ifx\filenamed\undefined\jobname\else\filenamed\fi%
(.tex) (as of \ifx\updated\undefined???\else\updated\fi)
 \TeX'ed at {\hour\time\divide\hour by 60{}%
\minutes\hour\multiply\minutes by 60{}%
\advance\time by -\minutes
\the\hour:\ifnum\time<10{}0\fi\the\time\  on \today\hfill}}
}
\def\inpro#1{\langle#1\rangle}
\def\ip#1{\langle\kern-.28em\langle#1\rangle\kern-.28em\rangle_\nu}
\def\norm#1{\Vert#1\Vert}
\def\openR{{{\rm I}\kern-.16em {\rm R}}}
\let\ga\alpha
\let\gb\beta
\let\gD\Delta
\let\gth\theta
\let\gL\Lambda
\let\gs\sigma
\let\ga\alpha
\let\gb\beta
\let\gs\sigma
\def\inpro#1{\langle#1\rangle}
\def\Hom{\mathop{\rm Hom}\nolimits}
\def\Implies{\hskip1em\Longrightarrow\hskip1em}
\def\formeq{\the\sectionno.\the\equationno}  
\def\elabel#1/#2/#3/{\global\advance\equationno by 1 %
\ifx#1\empty\else\emember#1%
\ifshowlabel\marginal{\string#1}\fi\fi%
\ifmmode\eqno{#3(\formeq#2)}\else#3\formeq#2\fi} 
\def\makeblanksquare#1#2{
\dimen0=#1pt\advance\dimen0 by -#2pt
      \vrule height#1pt width#2pt depth0pt\kern-#2pt
      \vrule height#1pt width#1pt depth-\dimen0 \kern-#1pt
      \vrule height#2pt width#1pt depth0pt \kern-#2pt
      \vrule height#1pt width#2pt depth0pt
}
\def\damntilde{\char126}  
\title{\bf 
Putatively optimal projective spherical designs 
with little apparent symmetry
}
\author{Alex Elzenaar and Shayne Waldron\\
 \\
Department of Mathematics \\ University of Auckland\\
Private
Bag 92019, Auckland, New Zealand\\
e-mail: waldron@math.auckland.ac.nz}
\date{\today}
\begin{document}

\maketitle 

\begin{abstract}
We give some new explicit examples of putatively optimal projective 
spherical designs. 
i.e., ones for which there is numerical evidence that they are of minimal size.
These form continuous families, and so have little apparent symmetry 
in general, which 
requires the introduction of new techniques for their construction.
New examples of interest include an $11$-point spherical $(3,3)$-design for
$\RR^3$, and a $12$-point spherical $(2,2)$-design for $\RR^4$
given by four Mercedes-Benz frames that lie on equi-isoclinic planes.
We also give results of an extensive numerical study to determine the 
nature of the real algebraic variety of optimal projective real spherical designs,
and in particular when it is a single point (a unique design) 
or corresponds to an infinite family of designs.
\end{abstract}

\bigskip
\vfill

\noindent {\bf Key Words:}
spherical $t$-designs,
spherical half-designs,
tight spherical designs,
finite tight frames,
integration rules,
cubature rules,
cubature rules for the sphere,
numerical optimisation,
{\tt Manopt} software,
real algebraic variety

\bigskip
\noindent {\bf AMS (MOS) Subject Classifications:} 
primary
05B30, \ifdraft (Other designs, configurations) \else\fi
65D30, \ifdraft (Numerical integration) \else\fi
65K10, \ifdraft (Numerical optimization and variational techniques) \else\fi
49Q12, \ifdraft (Sensitivity analysis for optimization problems on manifolds) \else\fi
65H14, \ifdraft Numerical algebraic geometry \else\fi

\quad
secondary
14Q10, \ifdraft	Computational aspects of algebraic surfaces \else\fi
14Q65, \ifdraft	Geometric aspects of numerical algebraic geometry \else\fi
42C15, \ifdraft General harmonic expansions, frames  \else\fi
94B25.  \ifdraft Combinatorial codes  \else\fi

\vskip .5 truecm
\hrule
\newpage

\section{Introduction}

Due to a wide range of applications,
there is a large body of work
on the general problem of constructing points (or lines) on a sphere 
which are optimally separated in some way.
These configurations can be numerical or explicit, 
with the general hope being that numerical configurations
of interest approximate explicit constructions that might be found.
Some examples include Hardin and Sloane's list of numerical
spherical $t$-designs \cite{HS96}, the numerical constructions
of Weyl-Heisenberg SICs ($d^2$ equiangular lines in $\Cd$)
\cite{SG09} and exact constructions obtained from them
\cite{ACFW18}, the ``Game of Sloanes'' optimal packings
in complex projective space \cite{JKM19}, and minimisers
of the $p$-frame energy on the sphere \cite{BGMPV19}.

Here we consider numerical and explicit constructions 
of a putatively optimal 
set of points (or lines)
of what are variously called spherical $(t,t)$-designs for
$\Rd$ \cite{HW21}, spherical half-designs \cite{KP11} 
and projective $t$-designs \cite{H82}.
These are given by a sequence of vectors $v_1,\ldots,v_n\in\Rd$
(not all zero) which give equality in the inequality
\begin{equation} \label{realandcomplexWelchbd}
\sum_{j=1}^n \sum_{k=1}^n |\inpro{v_j,v_k}|^{2t} \ge 
 {1\cdot3\cdot5\cdots (2t-1)\over d(d+2)\cdots(d+2(t-1))}
\Bigl(\sum_{\ell=1}^n \norm{v_\ell}^{2t}\Bigr)^2,
\end{equation}
where $t=1,2,\ldots$. 
The case where all the vectors have unit length is variously referred
to as an equal-norm/unweighted/classical design, and in general
as a weighted design.
We observe (see \cite{W17}, \cite{HW21}) that
\begin{itemize}
\item These are projective objects (lines), which are 
counted up to projective unitary equivalence, i.e., for $U$ unitary
and $c_j$ unit scalars, we have that $(v_j)$ is a spherical $(t,t)$-design
if and only if $(c_jUv_j)$ is, and these are considered to be equivalent.
\item Spherical $(t,t)$-designs of $n$ vectors in $\Rd$ exist 
for $n$ sufficiently large, i.e., the algebraic variety
given by (\ref{realandcomplexWelchbd}) is nonempty for $n$ sufficiently large.
Designs for which $n$ is minimal are of interest, and are said to be {\bf optimal}.
\item The existence of (optimal) spherical designs can investigated 
numerically. 
\end{itemize}

If $(v_j)$ gives equality in (\ref{realandcomplexWelchbd}) up to 
machine precision, then we will call it a {\bf numerical} 
design. We say a numerical or explicit design is {\bf putatively
optimal} if a numerical search (which finds it) suggests that there 
is no design with fewer points.

The examples of putatively optimal spherical $(t,t)$-designs for $\Rd$ found 
so far (see Table 6.1 of \cite{W18})
come from cases where the algebraic variety of spherical $(t,t)$-designs
(up to equivalence) appears to consist of a finite number of points.
This can be detected by considering the $m$-products
$$ \gD(v_{j_1},\ldots,v_{j_m})
:= \inpro{v_{j_1},v_{j_2}} \inpro{v_{j_2},v_{j_3}}\cdots \inpro{v_{j_m},v_{j_1}},
\qquad 1\le j_1,\ldots,j_m\le n, $$
which determine projective unitary equivalence \cite{CW15}.
From these, it is then possible to conjecture what the symmetry
group of the 
design is \cite{CW18}, and ultimately to 
construct an explicit (putatively optimal) spherical $(t,t)$-design
as the orbit of a few vectors under the unitary action of the symmetry group
(cf.\ \cite{HW20}, \cite{ACFW18}).

In this paper, we consider, for the first time, the case when the
algebraic variety of optimal spherical $(t,t)$-designs appears 
to be uncountable (of positive dimension).
In the examples that we consider, a generic numerical putatively
optimal spherical $(t,t)$-design has a trivial symmetry group. 
However, there is often some structure, 
referred to as ``{\it repeated angles}'', i.e., some $2$-products 
$$ \gD(v_j,v_k)
= |\inpro{v_j,v_k}|^2, \qquad j\ne k, $$
are repeated. This is just enough structure to tease
out an uncountably infinite family of putatively optimal 
spherical $(t,t)$-designs, in some examples.

\section{Numerics}

For $V=[v_1,\ldots,v_n]\in\RR^{d\times n}$, let
$f(V)=f_{t,d,n}(V)\ge0$ be given by
\begin{equation} 
\label{fdefn}
f(V)
:=\sum_{j=1}^n \sum_{k=1}^n |\inpro{v_j,v_k}|^{2t} - c_t(\Rd)
\Bigl(\sum_{\ell=1}^n \norm{v_\ell}^{2t}\Bigr)^2, \qquad
c_t(\Rd):= \prod_{j=0}^{t-1} {2j+1\over d+2j}.
\end{equation}

We consider the real algebraic variety of spherical $(t,t)$-designs 
given by $f(V)=0$, subject to the (algebraic) constraints
\begin{align*} 
& \norm{v_1}^2=\cdots=\norm{v_n}^2=1, 
\qquad\hbox{equal-norm/unweighted/classical designs} \cr
&\norm{v_1}^2+\cdots+\norm{v_n}^2=n,
\qquad\hbox{weighted designs \quad
($n$ chosen for convenience).}
\end{align*}
This has been studied in the case $t=1$, where it 
gives the {\it tight frames} \cite{CMS17}, \cite{W18}.
In particular, local minimisers of $f$ for $t=1$ are global minimisers.
It is not known if this is true for $t>1$, and obviously this impacts
on the numerical search for designs, e.g., a local minimiser which 
was not a global minimiser might be more easily found, leading to 
a false conclusion that there is no spherical $(t,t)$-design.

We are primarily interested in the minimal $n$ for which 
the variety is nonempty (denoted by $n_e$ and $n_w$, respectively), i.e.,
the optimal spherical $(t,t)$-designs. We have
$$ {t+d-1\choose t}=\dim(\Hom(t))\le n_w\le n_e \le \dim(\Hom(2t))
={2t+d-1\choose 2t}. $$
For $d$ fixed, $n_e$ and $n_w$ are increasing functions of $t$.

A numerical search was done in \cite{HW21} using an iterative method 
that moves in the direction of $-\nabla f(V)$. 
The results there, and in Table 1 of \cite{BGMPV19}, have
been duplicated and extended by using the {\tt manopt} software \cite{manopt}
for optimisation on manifolds and matrices
(implemented in Matlab). The putatively optimal numerical designs that we found
are summarised in Table \ref{real(t,t)-designs},
and can be downloaded from \cite{EW22} and viewed at
$$ \hbox{\tt www.math.auckland.ac.nz/\damntilde waldron/SphericalDesigns} $$

Here are some details about our {\tt manopt} calculations:
\begin{itemize}
\item The cost function $f$ of (\ref{fdefn}) was minimised using the 
{\tt trustregions} solver.
\item This requires the manifold over which the minimisation is done 
to be specified.
We used {\tt obliquefactory} for real equal-norm designs and
{\tt euclideanfactory} for real weighted designs,
and {\tt obliquecomplexfactory} and {\tt euclideancomplexfactory}
for complex designs.
\item Since {\tt euclideanfactory(d,n)} is the manifold $\RR^{d\times n}$,
minimising the homogeneous polynomial $f$ tended to give minima of 
small norm. To avoid this, we added the term $(\norm{v_1}^2-1)^2$ to the 
cost function, so that the weighted designs $V=[v_1,\ldots,v_n]$ obtained have the 
first vector $v_1$ of unit norm.
For the purpose of calculating errors, $V$ was normalised so that
$\norm{v_1}^2+\cdots+\norm{v_n}^2=n$
(as for unit-norm designs).
\item
The solver requires the gradient and Hessian of $f$ as parameters.
The gradient function (page 140, \cite{W18}) was given 
explicitly, and the Hessian was calculated symbolically from 
$f$ by {\tt trustregion}.
\item We used the default solver options, except for the
{\tt delta\textunderscore bar} parameter, where
setting {\tt problem.delta\textunderscore bar}
to {\tt problem.M.typicaldist()/10}, rather than the default
{\tt problem.M.typicaldist()} gave better results.
\item
We considered the absolute error in $V$ being a design, i.e.,
\begin{equation}
f_{t,d,n} = f_{t,d,n}(V)
 :=\hbox{$\sum_j\sum_k |\inpro{v_j,v_k}|^{2t}- c_t(\Rd) 
(\sum_\ell\norm{v_\ell}^{2t})^2\ge0$}, 
\end{equation}
where $\trace(V^*V)=\norm{v_1}^2+\cdots+\norm{v_n}^2=n$.
\end{itemize}
See \cite{E20} for further details. 


\begin{figure}[ht]
\centering
\includegraphics[width=7.5cm]{\figures 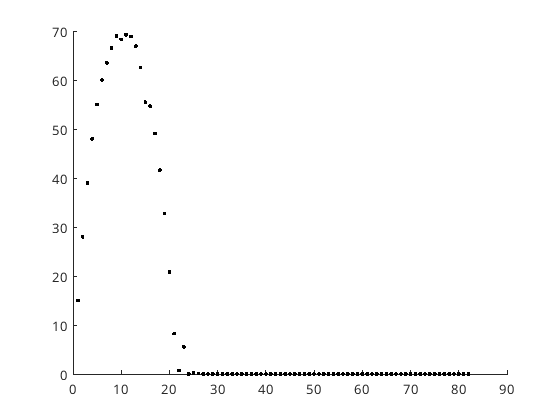}
\includegraphics[width=7.5cm]{\figures 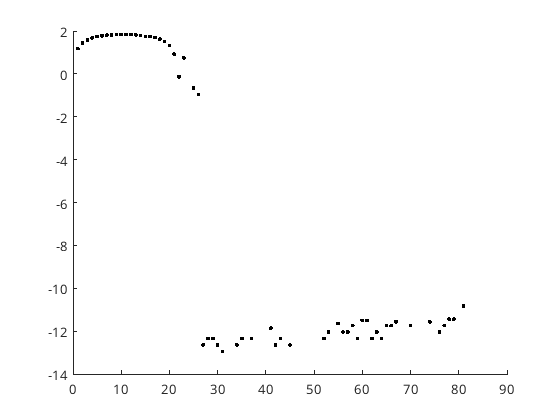}
%
%
\caption{The graphs of $n\mapsto f_{t,d,n}$ and $n\mapsto \log_{10} f_{t,d,,n}$
for $t=2$, $d=6$, i.e., the error in numerical approximations to a  
unit-norm spherical $(2,2)$-design of $n$ vectors in $\RR^6$.  }
\label{generic-situation}       
\end{figure}

We now discuss the heuristics of determining 
when $f_{t,d,n}(V)$ is (numerically) zero.

\section{The overall picture}

We use $f_{t,d,n}(V)$ for a numerically computed $V=[v_1,\ldots,v_n]$ 
as a proxy for
$$ \ga_{t,d,n} := \min_{V\in\RR^{d\times n}\atop\trace(V^*V)=n} f_{t,d,n}(V),$$
where the condition $\norm{v_j}=1$ is added for unit-norm designs.
It is known that
\begin{itemize} 
\item For equal-norm designs
$n\mapsto \ga_{t,d,n}$ is zero for some (large) $n$.
\item For unweighted designs
$n\mapsto \ga_{t,d,n}$ is decreasing, becoming zero for some (large) $n$.
\end{itemize}
Moreover
\begin{itemize}
\item For large $n$ (relative to $t$ and $d$), a random set of $n$ points 
is close to being a spherical $(t,t)$-design, 
and hence has a small error $f_{t,d,n}(V)$. 
\end{itemize}
A priori, these properties suggest that it may be difficult to identify 
$(t,t)$-designs, in the sense that the error $n\mapsto f_{t,d,n}(V)$ 
slowly approaches numerical zero.
However, extensive calculations suggest that in
the ``generic'' situation (see Figure \ref{generic-situation})
this is not the case:
\begin{itemize}
\item[] {\bf Generic situation:} At the point where an optimal $(t,t)$-design
is obtained the error ``jumps down'' to numerical zero.
\end{itemize}


\noindent
There are also ``special'' situations 
(see Figures \ref{special-situation} and \ref{special-situation_3_8}), 
where 
(by reasons of symmetry)
\begin{itemize}
\item[] {\bf Special situation:}
An equal-norm 
$(t,t)$-design with a unexpectedly small number of vectors
exists. This design may or may not be obtained by 
calculating a single numerical design. Here the error jumps to zero, 
but then returns to roughly the generic situation (nonzero with an eventual
jump to numerical zero).
\end{itemize}

\begin{figure}[ht]
\centering
\includegraphics[width=7.5cm]{\figures 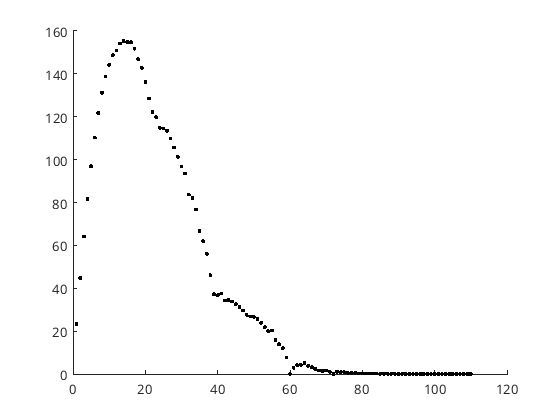}
\includegraphics[width=7.5cm]{\figures 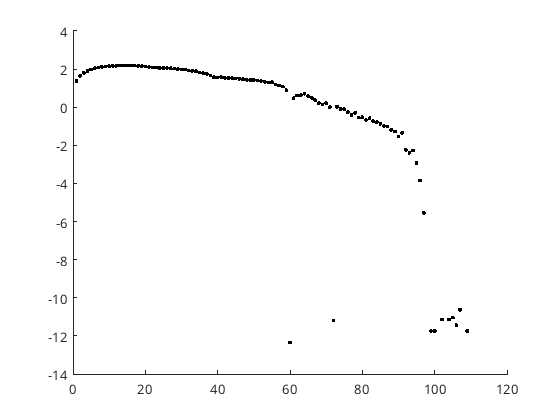}
\caption{The graphs of $n\mapsto f_{t,d,n}$ and $n\mapsto \log_{10} f_{t,d,,n}$
for $t=5$, $d=4$, i.e., the error in numerical approximations to a
unit-norm spherical $(5,5)$-design of $n$ vectors in $\RR^4$.  }
\label{special-situation}       
\end{figure}

The error graphs for unweighted $(t,t)$-designs share this ``jump'' phenomenon
(see Figure \ref{weighted-jump}),
but are strictly decreasing (becoming constant once zero is obtained).
This is because a zero weight corresponds to a design
with one fewer point (and so increasing the number of points enlarges the 
possible set of designs).

\begin{figure}[H]
\centering
\includegraphics[width=7.5cm]{\figures 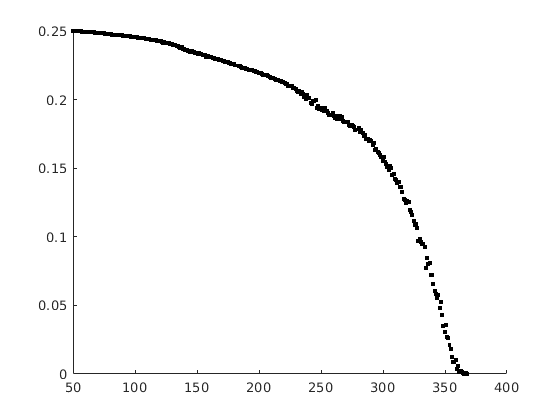}
\includegraphics[width=7.5cm]{\figures 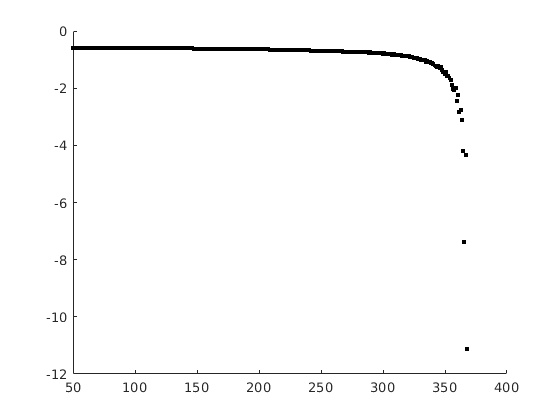}
\caption{The graphs $n\mapsto f_{t,d,n}$ and $n\mapsto \log_{10} f_{t,d,,n}$
of the error in 
approximations to weighted designs with 
$t=6$ and $d=5$, i.e., 
$(6,6)$-designs of
$n$ vectors in $\RR^5$.}
\label{weighted-jump}       
\end{figure}

The cost of finding of a numerical approximation to a spherical 
$(t,t)$-design in $\Rd$ grows with $t$ and $d$.
Therefore (like in previous studies)
we could only calculate numerical designs up to a certain point. 
The previous calculations of \cite{BGMPV19} and  \cite{HW21}
were replicated and extended. These are summarised in Table 
\ref{real(t,t)-designs} below, with 
comments, e.g.,
\begin{itemize}
\item[]  {\it structure} means some angles are repeated for equal-norm designs
({\it repeated angles}),
and some norms are repeated for unweighted designs.
\item[] {\it infinite family} means a different numerical design is obtained
each time, and we infer that the algebraic variety of optimal designs
has positive dimension.
\item[] {\it group structure} means that a finite number of numerical 
designs are obtained, which are a union of orbits of some (symmetry) group.
\end{itemize}
A set of equal-norm vectors for which the angles $|\inpro{v_j,v_k}|$, $j\ne k$, 
are all equal is said to be {\bf equiangular}.

The following example shows that
minimising $f_{t,d,n}$ over a larger number of points than for an optimal 
design can give a unique configuration.


\begin{example}
\label{Mercedesexample} Minimisation of $f_{t,d,n}$ 
for $t=2$ and $n$ equal-norm vectors in $\RR^2$ gives
\begin{itemize}
\item[] {\rm $n=3$:} 
the unique optimal configuration of three equiangular lines in $\RR^2$.
\item[] {\rm $n=4$:}
a unique configuration of two MUBs (mutually unbiased bases), 
\hbox{\hskip1.7truecm} 
 \hbox{\hskip1.2truecm}
equivalently, four equally spaced lines.
\item[] {\rm $n=5$:}
a unique configuration of five equally spaced lines.
\item[] {\rm $n=6$:}
configurations with six angles of ${1\over2}$ 
and three other angles (each appearing \hbox{\hskip1.3truecm} $3$ times), which are seen to be the
union of two Mercedes-Benz frames.
\end{itemize}
The set of $t+1$ equally spaced lines in $\RR^2$ is a known optimal
spherical $(t,t)$-design.
\end{example}

\vfil\eject

\setlength{\tabcolsep}{3pt}
\renewcommand{\arraystretch}{1.0}
\renewcommand{\arraystretch}{0.85}
\renewcommand{\arraystretch}{0.9}
\begin{table}[H]
\small
\footnotesize
\caption{The minimum numbers $n_w$ and $n_e$ of vectors in
a weighted and in a equal-norm spherical $(t,t)$-design for $\Rd$
(spherical half-design of order $2t$) as calculated numerically.
The $(t,t)$-design of $t+1$ vectors in $\RR^2$ was 
obtained for all $t$ (not all cases are listed).
 }
\begin{center}
\label{real(t,t)-designs}       
\begin{tabular}{| >{$}l<{$}  >{$}l<{$} | >{$}l<{$}  >{$}l<{$} | >{$}l<{$} >{$}l<{$} | }
\hline  
t & d & n_w & n_e & \hbox{Remarks on $n_w$} & \hbox{Remarks on $n_e$} \\
\hline
 & & & & & \\ [-2.0ex]
2 & 2 & 3 & 3 & \hbox{Mercedes-Benz frame} & \hbox{see Example \ref{Mercedesexample}} \\
2 & 3  & 6  & 6  &   \hbox{equiangular lines in $\RR^3$} & \\
2 & 4 & 11 & 12 & \hbox{\S\ref{Rez(2,2)-designs} \cite{S71}, \cite{R92}, infinite family}    
& \hbox{infinite family (Theorem \ref{isoclinicMercedes})} 
 \\
2 & 5 & 16 & 20 &  
\hbox{\S\ref{Rez(2,2)-designs} unique, group structure \cite{MW20}}  
& \hbox{infinite family (Example \ref{20point2desR5ex}) } \\
2 & 6 & 22  & 24  & \hbox{\S\ref{Rez(2,2)-designs} unique, group structure
\cite{MW20} } & \hbox{repeated angles (Example \ref{24point2desR6ex})} \\ 
2 & 7  & 28 & 28 &  \hbox{equiangular lines in $\RR^7$} & \\
2 & 8 & 45  & 51 & \hbox{infinite family, no structure} & 
\hbox{infinite family, no structure}  \\
2 & 9 & 55 & 67 & \hbox{infinite family, no structure} 
& \hbox{infinite family, no structure}  \\
2 & 10 & 76 & 85 & \hbox{infinite family, no structure} 
& \hbox{infinite family, no structure}  \\
2 & 11 & 96 & 106 & \hbox{infinite family, no structure} & \hbox{infinite family, no structure} \\
2 & 12 & 120 & 131 & \hbox{infinite family, no structure} & \hbox{infinite family, no structure} \\
2 & 13 & 146 & 159 & \hbox{infinite family, no structure} & \hbox{infinite family, no structure} \\
2 & 14 & 177 & 190 & \hbox{infinite family, no structure}  & \hbox{infinite family, no structure} \\
2 & 15 & 212 & 226 & & \hbox{infinite family, no structure} \\
2 & 16 & 250 & 267 & & \hbox{infinite family, no structure} \\
2 & 17 & 294 & 312 & & \hbox{infinite family, no structure} \\
2 & 18 & 342 & 362 & & \\
 & & & & & \\ [-2.0ex]
3 & 2  & 4  & 4  &  \hbox{two real mutually unbiased bases} & \hbox{see Example \ref{Mercedesexample}} \\
3 & 3 & 11  & 16 &  \hbox{\S\ref{Rez3.3.11} Reznick, no structure} & \hbox{infinite family, no structure} \\
3 & 4 & 23  & 24 &   \hbox{group structure (Example \ref{23vect(3,3)designR4})} & \hbox{infinite family (Example \ref{Kempnerexample})} \\
3 & 5 & 41 & 55 &  \hbox{group structure (Example \ref{41vect(3,3)designR5})} & 
\hbox{infinite family, no structure}  \\
3 & 6 & 63 & 96 & \hbox{unique, two orbits (Example \ref{union2designs})} &
\hbox{infinite family, no structure} \\
3 & 7 & 91 & 158 & \hbox{unique, two orbits (Example \ref{union2designs})} &
\hbox{infinite family, no structure}  \\
3 & 8 & 120 & 120 & \hbox{unique (Example \ref{Kotelinaexample}) } &
\hbox{see Figure \ref{special-situation_3_8}}  \\
3 & 9 & 338 & 380 & \hbox{infinite family, no structure}  & \hbox{infinite family, no structure} \\
 & & & & & \\ [-2.0ex]
4 & 2 & 5 & 5 & \hbox{Equally spaced lines} & \hbox{see Example \ref{Mercedesexample}}  \\
4 & 3 & 16 & 24 & \hbox{unique, two orbits (Example \ref{union2designs})} 
& \hbox{repeated angles (Example \ref{24vect(4,4)designR3})} \\
4 & 4 & 43 & 57 & \hbox{infinite family, no structure}  &
\hbox{infinite family, no structure} \\
4 & 5 & 101 & 126 & \hbox{infinite family, no structure} & \hbox{infinite family, no structure} \\
4 & 6 & 217 & 261 & & \\
4 & 7 & 433 & \grey 504 & & \\
 & & & & & \\ [-2.0ex]
5 & 2 & 6 & 6 & \hbox{Equally spaced lines}  & \hbox{see Example \ref{Mercedesexample}} \\
5 & 3 & 24 & 35 & \hbox{infinite family, no structure} & 
\hbox{infinite family, no structure} \\
5 & 4 & 60 & 60 & \hbox{unique, one orbit \cite{HW21}} & 
\hbox{see Figure \ref{special-situation} and Example \ref{isoclinicsubspacesdesign}} \\
5 & 5 & 203 & \grey 253 & & \\
5 & 6 & 503 & \grey 604 & \hbox{infinite family, no structure} & \\
 & & & & & \\ [-2.0ex]
6 & 3 & 32 & 47 & \hbox{infinite family, no structure} & 
\hbox{infinite family, no structure} \\
6 & 4 & 116 & 154 & \hbox{infinite family, no structure} &
\hbox{infinite family, no structure} \\
6 & 5 & 368 & 458 & & \\
 & & & & & \\ [-2.0ex]
7 & 3 & 41 & 61 & \hbox{unique (Example \ref{41vect(7,7)designR3})} & \hbox{infinite family, no structure} \\
7 & 4 & 173 & 229 &  \hbox{infinite family, no structure} & \\
 & & & & & \\ [-2.0ex]
8 & 3 & 54 & 78 & \hbox{infinite family, some structure} & \hbox{infinite family, no structure} \\
8 & 4 & \grey 249 & & & \\
 & & & & & \\ [-2.0ex]
9 & 3 & 70 & 97 & \hbox{infinite family, no structure} & \\
9 & 4 & 360 & & \hbox{unique, two orbits (Example \ref{union2designs})} & \\
 & & & & & \\ [-2.0ex]
10 & 3 & 89 & \grey 119 & \hbox{infinite family, no structure} & 
\hbox{see Example \ref{largetjumps}} \\
\hline
\end{tabular}
\end{center}
\end{table}

We now describe some specific $(t,t)$-designs that we obtained
during 
our calculations.

\section{A family of $12$-point spherical $(2,2)$-designs for $\RR^4$}

Putatively optimal unit-norm $12$-point spherical $(2,2)$-designs for $\RR^4$ are
easily found. These numerical designs appear to have trivial projective
symmetry group. However, they all have the feature:
\begin{itemize}
\item {\em Each vector/line makes an angle of ${1\over2}$ with two others,} 
\end{itemize}
i.e., each row and column of the Gramian has two entries of modulus ${1\over2}$ 
(up to machine precision). We now outline how we went from this
observation, to an infinite family of explicit putatively optimal designs
(Theorem \ref{isoclinicMercedes}). 

\begin{itemize}
\item The vector and the two making an angle ${1\over2}$ with it
were seen (numerically) to give three equiangular lines. 
\item These four sets of three equiangular lines, were seen to be
Mercedes-Benz frames, i.e., each lies in a $2$-dimensional subspace.
\item The four associated $2$-dimensional subspaces are equi-isoclinic
planes in $\RR^4$.
\end{itemize}

Let $V_1,\ldots,V_4\in\RR^{4\times2}$
have orthonormal columns. Then $P_j:=V_jV_j^*$
is the orthogonal projection onto the $2$-dimensional 
subspace of $\RR^4$ spanned by the columns of $V_j$.
These four subspaces (planes) are said to be {\bf equi-isoclinic} if
\begin{equation}
\label{isoclinicdefn}
P_jP_kP_j = \gs^2 P_j, \qquad j\ne k,
\quad\hbox{for some $\gs$}.
\end{equation}
There is a unique such configuration 
\cite{LS73}, \cite{ET06} (up to a unitary map) given by
\begin{equation}
\label{Vjdefn}
[V_1,V_2,V_3,V_4]= {1\over\sqrt{6}}
\pmat{ \sqrt{6}&0&\sqrt{2}&0&\sqrt{2}&0&\sqrt{2}&0\cr 
0&\sqrt{6}&0&\sqrt{2}&0&\sqrt{2}&0&\sqrt{2}\cr
0&0&-2&0&1&-\sqrt{3}&1&\sqrt{3} \cr
0&0&0&-2&\sqrt{3}&1&-\sqrt{3}&1}.
\end{equation}
A {\bf Mercedes-Benz frame} is a set of three equiangular vectors/lines 
in a $2$-dimensional subspace.


\begin{theorem}
\label{isoclinicMercedes}
Let $(v_j)$ consist of four Mercedes-Benz frames 
that lie in four equi-isoclinic planes in $\RR^4$. Then $(v_j)$ is a $12$-vector
spherical $(2,2)$-design for $\RR^4$.
\end{theorem}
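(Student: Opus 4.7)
The plan is to verify the equality case of (\ref{realandcomplexWelchbd}) directly. For $12$ unit vectors in $\RR^4$ with $t=2$, the target identity is
$$\sum_{j=1}^{12}\sum_{k=1}^{12}|\inpro{v_j,v_k}|^4 = c_2(\RR^4)\cdot 12^2 = \tfrac{3}{4\cdot 6}\cdot 144 = 18.$$
I would label the vectors as $u_i^{(a)}$ with $i\in\{1,2,3\}$ and $a\in\{1,2,3,4\}$, where $(u_i^{(a)})_{i=1}^3$ is the Mercedes-Benz frame lying in the plane $P_a(\RR^4)$, and split the double sum into a same-frame piece $S_1$ (both indices share the same $a$) and a cross-frame piece $S_2$ ($a\ne b$), evaluating each separately.

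For $S_1$, each Mercedes-Benz frame is itself the standard tight spherical $(2,2)$-design of three equally spaced lines in $\RR^2$, so equality in (\ref{realandcomplexWelchbd}) applied inside each $2$-plane gives
$$\sum_{i,i'=1}^{3}|\inpro{u_i^{(a)},u_{i'}^{(a)}}|^4 = c_2(\RR^2)\cdot 9 = \tfrac{3}{8}\cdot 9 = \tfrac{27}{8},$$
and summing over $a=1,\ldots,4$ yields $S_1 = 4\cdot 27/8 = 27/2$. For $S_2$ I would exploit the same $(2,2)$-design identity in polarised form: since each $u_i^{(a)}$ lies in $\ran(P_a)$,
$$\sum_{i=1}^{3}|\inpro{u_i^{(a)},w}|^4 = \tfrac{9}{8}\norm{P_a w}^4, \qquad w\in\RR^4.$$
Specialising $w=u_j^{(b)}$ with $b\ne a$, and using $u_j^{(b)}=P_b u_j^{(b)}$ together with the equi-isoclinic relation (\ref{isoclinicdefn}), this gives
$$\norm{P_a u_j^{(b)}}^2 = \inpro{u_j^{(b)},P_b P_a P_b\, u_j^{(b)}} = \gs^2\norm{u_j^{(b)}}^2 = \gs^2,$$
so each ordered pair $(a,b)$ with $a\ne b$ contributes $3\cdot\tfrac{9}{8}\gs^4$ to $S_2$, producing $S_2 = 12\cdot 3\cdot\tfrac{9}{8}\gs^4$.

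The only nontrivial step remaining is to pin down $\gs^2 = 1/3$ for the unique equi-isoclinic configuration (\ref{Vjdefn}), and this is the main (essentially the only) obstacle. I would first verify, either by direct substitution into (\ref{Vjdefn}) or from general theory on four equi-isoclinic $2$-planes in $\RR^4$, that $P_1+P_2+P_3+P_4 = 2I_4$, and then sandwich this identity with $P_j$ on both sides to get $2P_j = P_j + 3\gs^2 P_j$, i.e.\ $\gs^2 = 1/3$. Substituting yields $S_2 = 9/2$, so $S_1+S_2 = 27/2 + 9/2 = 18$, which is precisely the $(2,2)$-design identity. Apart from computing $\gs^2$, the argument is just a bilinear expansion that reduces the design condition in $\RR^4$ to the Mercedes-Benz design identity in $\RR^2$ combined with the equi-isoclinic relation.
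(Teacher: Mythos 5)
Your proposal is correct, and it reaches the identity $4\cdot\tfrac{27}{8}+12\cdot\tfrac38=18$ by a route that differs from the paper's in the key step. The overall block decomposition is the same: the paper also splits the double sum into the four diagonal Gramian blocks (each a Mercedes-Benz Gramian contributing $\tfrac{27}{8}$) and the twelve cross blocks. Where the paper then verifies by a direct calculation that each off-diagonal block $(V_jM_j)^*V_kM_k$ is circulant with entries satisfying $a^4+b^4+c^4=\tfrac18$, you instead invoke the Bessel identity (\ref{BesselIdentity}) for each Mercedes-Benz frame inside its own plane, $\sum_i|\inpro{u_i^{(a)},w}|^4=\tfrac98\norm{P_aw}^4$ for all $w\in\RR^4$, combine it with the equi-isoclinic relation (\ref{isoclinicdefn}) to get $\norm{P_au_j^{(b)}}^2=\gs^2$, and then pin down $\gs^2=\tfrac13$. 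That last step, which you flag as the only remaining obstacle, does close as you suggest: from (\ref{Vjdefn}) one has $V_1^*V_2=\tfrac{1}{\sqrt3}I_2$ (and similarly for the other pairs), so $P_jP_kP_j=\tfrac13P_j$, or equivalently one checks $P_1+P_2+P_3+P_4=2I_4$ entrywise and sandwiches with $P_j$ as you describe. What your route buys is conceptual transparency: it makes explicit that the conclusion is independent of the rotation parameters $\gth_j$ of the individual frames, since only the design property of each frame within its plane and the isoclinic angle enter, and it avoids parametrising the frames at all; the paper's circulant computation is shorter to state but carries this independence only implicitly, through the fact that $a^4+b^4+c^4$ is constant. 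Either way the cross-block contribution per ordered pair is $3\cdot\tfrac98\gs^4=\tfrac38$, matching the paper's $a^4+b^4+c^4=\tfrac18$ per block, so the two arguments are numerically consistent and both complete the proof.
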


\begin{proof} 
Let $M_j\in\RR^{2\times3}$ give a Mercedes-Benz frame (in $\RR^2)$, i.e., have the form
$$ M_j= [u_j,Ru_j,R^2u_j], \qquad  R=\pmat{\cos{2\pi\over3} &-\sin {2\pi\over3}
\cr \sin {2\pi\over3} & \cos{2\pi\over3}}
=\pmat{-{1\over2} &-{\sqrt{3}\over2}
\cr {\sqrt{3}\over2} & -{1\over2} }, \quad u_j=\pmat{\cos\gth_j\cr\sin\gth_j},  $$
and $V_j\in\RR^{4\times2}$ be given by (\ref{Vjdefn}).
Then all such $(v_j)$ are given up to projective unitary equivalence by  
$V=[V_1M_1,\ldots,V_4M_4]$.
The variational condition to be such a design is
\begin{equation} 
\label{12ptvarcdn}
\sum_{j=1}^{12} \sum_{k=1}^{12} |\inpro{v_j,v_k}|^{4} 
= {1\cdot3\over 4\cdot6}
\Bigl(\sum_{\ell=1}^{12} \norm{v_\ell}^{4}\Bigr)^2
={1\over 8}12^2=18,
\end{equation}
which we now verify by considering the $16$ blocks of the Gramian $V^*V=[(V_jM_j)^*V_kM_k]$.

The four diagonal blocks $(V_jM_j)^*V_jM_j=M_j^*(V_j^*V_j)M_j=M_j^*M_j$ are
the Gramian of a Mercedes-Benz frame, and so each contribute
$ 3\cdot1+6\cdot({1\over2})^4 ={27\over 8}$ to the 
left-hand side 
of the sum (\ref{12ptvarcdn}).
The off-diagonal blocks are all circulant (by a direct calculation)
$$  (V_jM_j)^*V_kM_k = \pmat{ a&b&c\cr c&a&b\cr b&c&a}, \qquad a^4+b^4+c^4={1\over8}. $$
Thus (\ref{12ptvarcdn}) holds as $4\cdot{27\over 8}+12\cdot{3\over8}=18$.
\end{proof}



Here are some further observations on this example:

\begin{itemize}
\item Our calculations suggest this gives the entire variety of optimal designs.
\item A simple calculation shows that $|\inpro{v_j,v_k}|$ 
can take any value in the interval $[0,{1\over\sqrt{3}}]$.
\item The optimal designs 
$V=V_{\gth_1,\gth_2,\gth_3,\gth_4}$ 
described in the proof
are a continuous
family (depending on three real parameters).  
It is believed that these are all such designs.
\item A generic design has no projective symmetries.
\item There are designs with projective symmetries. In particular,
$V_{0,{\pi\over2},{\pi\over2},{\pi\over2}}$ consists of three real MUBs 
(mutually unbiased orthonormal bases) for $\RR^4$, i.e., orthonormal bases for which vectors 
from different bases make an angle $|\inpro{v_j,v_k}|={1\over2}$, and has a projective symmetry group of order $576$.
These have the nice presentation
$$ [B_1,B_2,B_3] = {1\over\sqrt{2}}\pmat{ 
1&1&0&0& 1&1&0&0& 1&1&0&0 \cr
1&-1&0&0& 0&0&1&1& 0&0&1&1 \cr
 0&0&1&1& 1&-1&0&0& 0&0&1&-1 \cr
0&0&1&-1& 0&0&1&-1& 1&-1&0&0} $$
where (\ref{12ptvarcdn}) holds as $12\cdot1+(12\cdot8)\cdot({1\over2})^4+(12\cdot3)\cdot0^4=18$.
This design can also be constructed 
as a union of one or two orbits of the Shephard-Todd group $G(2,1,4)$
(see \cite{MW20}), the generating vectors being
$(1,1,0,0)$ and $(1,0,0,0)$, ${1\over2}(1,1,1,1)$.
\item The idea of decomposing a tight frame (design) as a union of smaller dimensional ones, 
as we have done here, is an old idea to understand and construct them. Here we have considered 
the subsets of vectors which form a regular simplex in $\RR^2$, which \cite{FJKM18}
call the {\it binder}.  
\end{itemize}


\vfil\eject

\section{Selected calculations}

\subsection{A family of $24$-point spherical $(4,4)$-designs for $\RR^3$}

A set of three equiangular vectors $(v_j)$ is said to be {\bf isogonal} if they
span a $3$-dimensional subspace, i.e., by appropriately multiplying the
vectors by $\pm1$ their Gramian has the form
$$\pmat{1&a&a\cr a&1&a\cr a&a&1}, \quad -{1\over2}< a < 1. $$
The limiting case $a=-{1\over2}$ gives a Mercedes-Benz frame 
and $a=1$ gives three equal lines. 
These can be viewed as a {\it lift} of a Mercedes-Benz frame to three dimensions
\cite{W18}.

Putatively optimal $24$-point spherical $(4,4)$-designs for $\RR^3$ are readily calculated,
and all appear to have the following structure:

\begin{itemize} {\em
\item Each is a union of $8$ sets of three isogonal lines.
\item Each set of isogonal lines is the lift of a Mercedes-Benz frame in
a fixed $2$-dimensional subspace.
\item This suggests an order three rotational symmetry.
} \end{itemize} 
We speculate that (up to projective unitary equivalence)
every design has the form:
$$V=[v_1,gv_1,g^2v_1,\ldots,v_8,gv_8,g^2v_8], 
$$ 
where
$$ g=\pmat{1&\cr&R}, \quad 
R=\pmat{-{1\over2} &-{\sqrt{3}\over2}\cr {\sqrt{3}\over2} & -{1\over2} },
\qquad v_j=\pmat{b_j\cr c_j}, \quad 
b_j\in\RR,\ c_j=\pmat{y_j\cr z_j}\in\RR^2. $$
The blocks of the Gramian have the (numerically observed) circulant form
$$ 
[v_k,gv_k,g^2v_k]^*
[v_j,gv_j,g^2v_j]
= \pmat{
\inpro{v_j,v_k}&\inpro{gv_j,v_k}&\inpro{g^2v_j,v_k}\cr
\inpro{g^2v_j,v_k}&\inpro{v_j,v_k}&\inpro{gv_j,v_k}\cr
\inpro{gv_j,v_k}&\inpro{g^2v_j,v_k}&\inpro{v_j,v_k}}. $$
In particular, since $|b_j|^2+\norm{c_j}^2=1$, the diagonal blocks are given by
$$\pmat{ 1&a_j&a_j \cr a_j&1&a_j \cr a_j&a_j&1},
\qquad a_j:=\inpro{v_j,gv_j}=b_j^2+(1-b_j^2)\inpro{{c_j\over\norm{c_j}},
R{c_j\over\norm{c_j}}}
={3\over2}(b_j^2-{1\over3}). $$

The definition $f(V)=0$ for being a design gives a polynomial of 
degree $16$ in the $24$ variables $b_j,c_j$. The condition $|b_j|^2+\norm{c_j}^2=1$
allows this to be effectively reduced to $16$ variables. We now indicate
how the characterisation of a design as a cubature rule allows us to 
obtain a system of lower degree polynomials.

A unit-norm sequence of $n$ vectors $(v_j)$ in $\Rd$ is a spherical $(t,t)$-design
if and only if it satisfies the cubature rule  (see Theorem 6.7 \cite{W18})
\begin{equation}
\label{cubaturerule}
\int_\SS p \, d\gs = {1\over n}\sum_{j=1}^n p(v_j), \qquad \forall p\in\Hom(2t),
\end{equation}
where $\gs$ is the normalised surface area measure on the unit sphere $\SS$ in $\Rd$. 
and $\Hom(2t)$ are the homogeneous polynomials $\Rd\to\RR$ of degree $2t$.
The integral of any monomial $x^\ga=x_1^{\ga_1}\cdots x_d^{\ga_d}$ is zero, unless the
power of every coordinate is even, in which case
\begin{equation}
\label{monomialintformula}
\int_\SS x^{2\ga} \, d\gs(x)= {({1\over2})_\ga\over({d\over2})_{|\ga|}}, 
\end{equation}
with $(a)_\ga:=\prod_j a_j(a_j+1)\cdots (a_j+\ga_j-1)$ the Pochammer symbol.

We now consider our design. The cubature rule (\ref{cubaturerule}) for $\Hom(8)$
restricted to the sphere $x^2+y^2+z^2=1$, implies that the monomials $x^2,x^4,x^6,x^8$ are 
integrated, i.e.,
$$  {1\over 8}\sum_j b_j^2={1\over3}, \qquad
{1\over8}\sum_j b_j^4={1\over5}, \qquad
{1\over8}\sum_j b_j^6={1\over7}, \qquad
{1\over8}\sum_j b_j^8={1\over9}, $$
which implies that
$$ \sum_j a_j = {3\over2}\left(\sum_j b_j^2-{8\over3}\right)=0,
\qquad \sum_j a_j^2 
= {9\over4}\left(\sum_j b_j^4-{2\over3}\sum_j b_j^2+{8\over9}\right)
={8\over5}. $$
Since our design has the symmetry group $G=\{I,g,g^2\}$, 
it is sufficient to check the cubature rule holds for the 
polynomials $\Hom(8)^G$, which are invariant under this group,
i.e., the image of $\Hom(8)$ under the Reynolds operator $R_G$ given by
$$ R_G(f) := {1\over|G|}\sum_{g\in G} f^g, \qquad f^g := f(g\cdot). $$
By computing the Molien series 
\begin{align*}
\sum_{g\in G} {1\over\det(I-tg)}
&= \sum_{j=0}^\infty \dim(H(j)^G) t^j \cr
&=1+t+2t^2+4t^3+5t^4+7t^5+10t^6+12t^7+15t^8+19t^9+\cdots,
\end{align*}
we see that $\Hom(8)^G$ has dimension $15$ (we are only concerned with its restriction 
to the sphere, which happens to have the same dimension).
We have
$$\Hom(2)^G=\spam\{x^2,y^2+z^2\}, $$
since $x^2$ (by our choice of $b_j$) and $x^2+y^2+z^2$ (which is $1$ on the sphere) 
are integrated by the cubature rule, so is $\Hom(2)^G$, and hence all of $\Hom(2)$.
We now consider 
$$ \Hom(4)^G=\spam\{x^4,(y^2+z^2)^2,x^2(y^2+z^2), xy(3z^2-y^2),xz(3y^2-z^2)\}. $$
On the sphere $x^2+y^2+z^2=1$, the first three of the polynomials above can be written
as $x^4,(1-x^2)^2,x^2(1-x^2)$ and so are integrated by the cubature rule.
To integrate the fourth polynomial $xy(3z^2-y^2)$, which 
can be written on the sphere as
$$ xy(3z^2-y^2)|_\SS = xy(3-3x^2-4y^2), $$
we must have
$$ {1\over8}\sum_j b_jy_j(3-3b_j^2-4y_j^2) = 0. $$
The fifth polynomial on the sphere cannot be written as a polynomial in $x,y$ only, 
and so we get the condition
$$ xz(3y^2-z^2)|_\SS = xz(3-3x^2-4z^2) \Implies
 {1\over8}\sum_j b_jz_j(3-3b_j^2-4z_j^2) = 0. $$
Continuing in this way, we obtain the following condition.


\begin{theorem} Let
$$ g=\pmat{1&\cr&R}, \quad 
R=\pmat{-{1\over2} &-{\sqrt{3}\over2}\cr {\sqrt{3}\over2} & -{1\over2} }, \qquad
v_j=\pmat{b_j\cr x_j\cr y_j}\in\RR^3, \quad b_j^2+x_j^2+y_j^2=1. $$
Then the orbit of the eight vectors $\{v_1,\ldots,v_8\}$ under the unitary 
action of the group $G=\{I,g,g^2\}$
is a $24$-vector $(4,4)$-design for $\RR^3$ if and only if
$$  {1\over 8}\sum_j b_j^2={1\over3}, \qquad {1\over8}\sum_j b_j^4={1\over5}, \qquad
{1\over8}\sum_j b_j^6={1\over7}, \qquad {1\over8}\sum_j b_j^8={1\over9}, $$
$$ \sum_j b_j^{2k-1}y_j(3-3b_j^2-4y_j^2)=\sum_j b_j^{2k-1}z_j(3-3b_j^2-4z_j^2) = 0, \quad
k=1,2,3, $$
$$ {1\over8} \sum_j b_j^2y_j^2(3-3b_j^2-4y_j^2)^2={8\over 315}, \qquad
 \sum_j b_j^{2k}y_jz_j(3z_j^2-y_j^2)(3y_j^2-z_j^2)=0, \quad k=0,1, $$
$$ \sum_j (y_j^4-z_j^4)(y_j^4-14y_j^2z_j^2+z_j^4) = 0. $$
\end{theorem}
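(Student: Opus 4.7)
The plan follows the outline in the paragraphs preceding the statement: invoke the cubature-rule characterization (\ref{cubaturerule}) for $\Hom(8)$ and restrict it to the $G$-invariants $\Hom(8)^G$. Since $V$ is a $G$-orbit by construction, averaging the integrand over $G$ changes neither side of (\ref{cubaturerule}), so the cubature rule for $\Hom(8)$ holds on $V$ if and only if ${1\over 8}\sum_{j=1}^8 p(v_j) = \int_\SS p\, d\gs$ for every $p$ in a basis of $\Hom(8)^G$, a 15-dimensional space by the Molien series already displayed.

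First I would construct the invariant ring explicitly. Since $g$ fixes $x$ and acts on $(y,z)$ by a rotation of $2\pi/3$, one has $\RR[x,y,z]^G = \RR[x,\,s,\,p,\,q]$ with $s = y^2+z^2$, $p = y^3 - 3yz^2$, $q = 3y^2z - z^3$, subject to the single syzygy $p^2+q^2 = s^3$. A basis of $\Hom(8)^G$ is then organized by the number of factors of $p$ or $q$: zero factors ($x^a s^b$ with $a+2b=8$, giving $5$ elements), one factor ($x^a s^b p$ or $x^a s^b q$ with $a+2b=5$, giving $3+3$ elements), and two factors ($x^a s^b$ times one of $p^2, pq, q^2$ with $a+2b=2$, giving $6$ elements reduced to $4$ independent via the two degree-$8$ consequences of the syzygy). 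Total $5+6+4 = 15$.

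For the zero-factor invariants, using $s|_\SS = 1-x^2$ reduces each to a polynomial in $x$ alone, and the cubature conditions yield the four moment equations ${1\over 8}\sum_j b_j^{2k} = \int_\SS x^{2k}\, d\gs = {1\over 2k+1}$ for $k=1,2,3,4$ via (\ref{monomialintformula}) (the $k=0$ equation $\sum_j 1 = 8$ being automatic). For the one-factor invariants the integral vanishes, since every monomial has an odd exponent in $y$ or $z$; rewriting $p|_\SS = y(3-3b^2-4y^2)$ and $q|_\SS = z(3-3b^2-4z^2)$ then gives the six listed linear conditions for $k=1,2,3$. For the two-factor invariants: $\int_\SS x^2 p^2\, d\gs = {8\over 315}$ by direct computation from (\ref{monomialintformula}), producing the displayed ${8\over 315}$ equation; $\int_\SS x^a s^b pq\, d\gs = 0$ by odd exponents, producing the two $k=0,1$ conditions $\sum_j b_j^{2k} y_j z_j (3z_j^2-y_j^2)(3y_j^2-z_j^2) = 0$ once one notes $pq = -yz(3z^2-y^2)(3y^2-z^2)$; and the remaining independent two-factor condition, coming from $s(p^2-q^2)$, gives the ``skew'' equation, which factors as $s(p^2-q^2) = (y^2-z^2)(y^2+z^2)(y^4-14y^2z^2+z^4) = (y^4-z^4)(y^4-14y^2z^2+z^4)$ and integrates to zero by the $y \leftrightarrow z$ symmetry $p \leftrightarrow -q$.

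The main obstacle is the degree-$8$ bookkeeping: confirming that the $15$ cubature equations collapse to exactly the listed set with no redundancies or missed constraints. This requires systematically using $b_j^2 + y_j^2 + z_j^2 = 1$ to clear powers of $x$ in favor of $y,z$ (so that, for instance, the cubature condition on $sp^2$ becomes equivalent to the skew equation together with the $b$-moment equations and the pointwise identity $p(v_j)^2 + q(v_j)^2 = s(v_j)^3$), and exploiting the syzygy $p^2+q^2 = s^3$ to collapse paired two-factor equations to the listed four. Once this finite linear-algebra exercise is completed, both necessity and sufficiency of the listed conditions for the $G$-invariant orbit $V$ to form a spherical $(4,4)$-design follow immediately.
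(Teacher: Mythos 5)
Your proposal is correct and follows essentially the same route as the paper: the cubature-rule characterisation restricted to the $15$-dimensional space $\Hom(8)^G$, an explicit basis of invariants built from $x$, $y^2+z^2$ and the two cubic harmonics $y(3z^2-y^2)$, $z(3y^2-z^2)$, and reduction on the sphere via $x^2+y^2+z^2=1$ to the listed $14$ nontrivial conditions. Your organisation through the invariant ring $\RR[x,s,p,q]$ with the syzygy $p^2+q^2=s^3$ is just a systematic derivation of the $15$-element basis that the paper writes down directly, and your individual verifications (e.g.\ $\int_\SS x^2p^2\,d\gs={8\over315}$, vanishing of the odd integrals, and $s(p^2-q^2)=(y^4-z^4)(y^4-14y^2z^2+z^4)$) match the paper's computations.
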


\begin{proof}
A basis for the $\Hom(8)^G$ is given by the $15$ polynomials
$$ x^8, \quad  x^6(y^2+z^2), \quad x^4(y^2+z^2)^2, \quad x^2(y^2+z^2)^3, \quad (y^2+z^2)^4, $$
$$ x^5y(3z^2-y^2), \quad x^3y(3z^2-y^2)(y^2+z^2), \quad xy(3z^2-y^2)(y^2+z^2)^2, $$
$$ x^5z(3y^2-z^2), \quad x^3z(3y^2-z^2)(y^2+z^2), \quad xz(3y^2-z^2)(y^2+z^2)^2, $$
$$ x^2y^2(3z^2-y^2)^2,  \quad x^2yz(3z^2-y^2)(3y^2-z^2), \quad yz(3z^2-y^2)(3y^2-z^2)(y^2+z^2), $$
$$ (y^2-z^2)(y^2+z^2)(y^2-4yz+z^2)(y^2+4yz+z^2) = (y^4-z^4)(y^4-14y^2z^2+z^4). $$
By using $x^2+y^2+z^2=1$ on the sphere to eliminate variables, and taking appropriate
linear combinations to simplify, we obtain the desired equations, e.g.,
the polynomials in the first row restricted to the sphere span the
same subspace as $1,x^2,x^4,x^6,x^8$, which gives the condition
$$ {1\over8}\sum_j b_j^{2k} = \int_\SS x^{2k}\,d\gs(x,y,z) = {1\over 2k+1}, \qquad k=0,1,2,3,4. $$
We omit the case $k=0$, since it automatically holds.
\end{proof}

This gives $19$ equations (the $11$ derived and $b_j^2+y_j^2+z_j^2=1$)
in the $24$ variables $b_j,y_j,z_j$, $1\le j\le 8$. 
We were unable to solve these equations using numerical solvers, however
they are easily seen to hold for the numerical designs we obtained.

\subsection{Spherical $(t,t)$-designs with some structure} 

Here is an example where designs with and without structure
are commonly generated.

\begin{example} 
\label{20point2desR5ex}
The equal-norm $20$-point $(2,2)$-designs in $\RR^5$
seem to split into two types:
\begin{itemize}
\item No apparent structure (repeated angles).
\item Exactly $190$ angles, each repeated $5$ times.
\end{itemize}
Both appear to be continuous families.
Further analysis of the numerical designs with repeated angles
shows each is a union of four sets of five vectors, which have 
just two angles (each occurring five times) and projective symmetry
group the dihedral group $D_5$.
\end{example}

Here is an example of the special situation.

\begin{example}
\label{24point2desR6ex}
The search for equal-norm $24$-point $(2,2)$-designs in $\RR^6$
returns either
\begin{itemize}
\item A set of vectors which is not a design, but does have repeated angles.
This might indicate local minima which are not global minima.
\item A design with repeated angles, specifically what appears to be
${1\over2}$ ($48$ times or $32$ times)
and $0$ ($12$ times or $20$ times).
\end{itemize}
We also note that there are no numerical equal-norm designs with $n=25,26$, 
and the minimiser for $25$ vectors appears to be a unique configuration with repeated
angles.
\end{example}







\begin{example}
\label{23vect(3,3)designR4}
The unweighted $23$-vector $(3,3)$-designs in $\RR^4$ 
seem to have some group structure: $12$ vectors with equal-norms,
which have just three angles between them (appearing with multiplicities
$30$, $30$, $6$).
\end{example}

\begin{example}
\label{41vect(3,3)designR5}
The unweighted $41$-vector $(3,3)$-designs in $\RR^5$
seem to have a unique group structure.
Two sets of $16$ vectors with equal norms (the same in all examples), 
four pairs with equal norms, and one vector with a unique norm 
(the same in all examples).
For those with largest norm, the (normalised) angles are $0$
($48$ times) or ${1\over2}$ ($72$ times)
(a MUB like configuration).
The other $16$ make angles ${1\over5}$ ($80$ times) 
and ${3\over5}$ ($40$ times).  
\end{example}

\begin{example}
\label{union2designs}
A number of spherical $(t,t)$-designs constructed
in \cite{MW20} as unions of two orbits that give lower order designs
appear (from our numerical search) to be optimal. 
These include $(3,3)$-designs of $63$ vectors in $\RR^6$ (orbits
of size $27$ and $36$), $91$ vectors in $\RR^7$ (orbits of size $28$ and $63$),
and a $(4,4)$-design of $16$ vectors in $\RR^3$ (orbits of size $6$ and $10$).
There is also a $(9,9)$-design of $360$ vectors in $\RR^4$
(orbits of size $60$ and $300$). This is always detected in our 
numerical search, which is costly, and so it is assumed to be unique 
and optimal.

\end{example}

\begin{example} 
\label{24vect(4,4)designR3}
The equal-norm $24$-vector $(4,4)$-designs in $\RR^3$ have
$92$ different angles, each appearing three times. They either
involve $3$ or $6$ vectors.
\end{example}

\begin{example}
\label{41vect(7,7)designR3}
There appears to be a unique unweighted $41$-vector $(7,7)$-design in $\RR^3$.
This appears in roughly half the searches.
It consists of $8$ sets of $5$ lines, each with projective symmetry group
the dihedral group of order $10$, together with a single line. 
The sets of $5$ lines present as $2$-angle frames, and can be viewed as nonunitary 
images of the unique harmonic frame of $5$ lines in $\RR^3$ (the lifted five equally
spaced lines in $\RR^2$).
\end{example}

We say that subspaces with orthogonal
projections $P_j$ and $P_k$ are {\bf isoclinic} with {\it angle} $\gs$
if (\ref{isoclinicdefn}) holds.

\begin{example}
\label{isoclinicsubspacesdesign}
The search for equal-norm $(5,5)$-designs for $\RR^4$
(see Figure \ref{special-situation}) provided two examples of the
special situation: a unique putatively optimal design of $60$ points,
and ones with $72$ points. The $72$-point designs
appear to be part of an infinite family.
Each numerical design has projective symmetry group $\ZZ_6$, 
and consists of $12$ orbits of size $6$. These orbits consist of six
equally spaced lines in a plane (two-dimensional subspace).
The $12$ planes in $\RR^4$ appear to have a unique
geometric configuration: each plane is orthogonal to one other, i.e., $\gs=0$, 
and makes the following angles with the other ten
$$  \gs_1^2={5+\sqrt{5}\over10}\approx0.72361\quad\hbox{($5$ times)},
\qquad \gs_2^2={5-\sqrt{5}\over10}\approx0.27640\quad\hbox{($5$ times)}.  $$
\end{example}

\begin{example} 
\label{largetjumps}
For large values of $t$, the jump in the generic case can be less
pronounced, e.g., for $(10,10)$-designs $\RR^3$. A heuristic explanation for this
is that for small angles, the terms $|\inpro{v_j,v_k}|^{2t}$ are close to numerical
zero, e.g., for $|\inpro{v_j,v_k}|\le{1\over3}$ and $t=10$, we have
$$ |\inpro{v_j,v_k}|^{2t}\le (\hbox{${1\over3}$})^{20}\approx 10^{-10}. $$
\end{example}

Motivated by our calculations, we will say that an equal-norm spherical $(t,t)$-design 
of $n$ points for $\RR^d$ is 
{\bf exceptional} if there exists no $(t,t)$-design of $n-1$ or $n+1$ points.
This is an easily checkable condition that can indicate the existence of 
interesting designs.

\begin{example} Of the putatively optimal spherical $(2,2)$-designs for $\Rd$ in Table 1,
those for $d=3,4,7$ are exceptional. There are exceptional $(3,3)$-designs for $d=4,8$.
The $(5,5)$-designs for $\RR^4$ of $60$ and $72$ points 
(Example \ref{isoclinicsubspacesdesign}) are exceptional.
\end{example}

\section{Designs from number theory and cubature}

We now consider some designs first obtained as algebraic formulas,
and a completely new one.

\subsection{The Reznick $11$-point spherical $(3,3)$-design for $\RR^3$}
\label{Rez3.3.11}

The first putatively optimal design on the list of \cite{HW21} for which an 
explicit design was not known is a weighted spherical $(3,3)$-design of $11$ points for $\RR^3$,
which was said to have ``{\it no structure}''. In \cite{BGMPV19} it is referred to 
as the {\it Reznick design}, due to the formula (9.36) of \cite{R92}
\begin{align} 
\label{Reznickformula}
\begin{split}
540 & (x^2+y^2+z^2)^3 
= 378x^6+378y^6+280z^6
+(\sqrt{3}x+2z)^6 +(\sqrt{3}x-2z)^6 \cr
&\qquad+(\sqrt{3}y+2z)^6 +(\sqrt{3}y-2z)^6 +(\sqrt{3}x+\sqrt{3}y+z)^6 
\cr
&\qquad +(\sqrt{3}x-\sqrt{3}y+z)^6 +(\sqrt{3}x+\sqrt{3}y-z)^6 +(\sqrt{3}x-\sqrt{3}y-z)^6. 
\end{split}
\end{align}
Let us elaborate. The definition $f(V)=0$ for being a spherical $(t,t)$-design is equivalent to
the ``Bessel identity'' (see Theorem 6.7 \cite{W18})
\begin{equation}
\label{BesselIdentity}
c_t(\Rd) \norm{x}^{2t}
=  {1\over\sum_{\ell=1}^n \norm{v_\ell}^{2t}} \sum_{j=1}^n(\inpro{x,v_j})^{2t}, \qquad
\forall x\in\Rd, 
\end{equation}
which allows the {\it $d$-ary $2t$-ic form} $\norm{x}^{2t}=(x_1^2+\cdots+x_d^2)^t$ to be
written as a sum of $n$ $2t$-powers. 
The converse is also true, that is if there is a constant $C$ with
\begin{equation}
\label{sumofsquareformula}
C \norm{x}^{2t} =  \sum_{j=1}^n(\inpro{x,v_j})^{2t}, \qquad \forall x\in\Rd,
\end{equation}
then integrating over the unit sphere in $\Rd$ using 
(\ref{monomialintformula}) for $\ga=(t,0,\ldots,0)$ gives
$$ C 
= \sum_{j=1}^n \norm{v_j}^{2t}\int_\SS (\inpro{x,{v_j\over\norm{vj}}})^{2t}\,d\gs(x)
= \sum_{j=1}^n \norm{v_j}^{2t}\int_\SS x_1^{2t}\,d\gs(x)
= c_t(\Rd) \sum_{j=1}^n \norm{v_j}^{2t}, $$
so that $(v_j)$ is a spherical $(t,t)$-design.
Thus
(\ref{Reznickformula}) gives an $11$-point spherical $(3,3)$-design
$$ V=\pmat{ 
\sqrt[6]{378} & 0 & 0 & \sqrt{3} & \sqrt{3} & 0 & 0 & \sqrt{3} & \sqrt{3} & \sqrt{3} & \sqrt{3}  \cr
0 & \sqrt[6]{378} & 0 & 0 & 0 & \sqrt{3} & \sqrt{3} & \sqrt{3} & -\sqrt{3} & \sqrt{3} & -\sqrt{3}  \cr
0 & 0 & \sqrt[6]{280} & 2 & -2 & 2 &-2 & 1 & 1 & -1 & -1 }. $$
Moreover, Theorem 9.28 of \cite{R92} implies that this 
design is optimal. We make some observations/comments based on our calculations:

\begin{itemize}
\item The algebraic variety of (optimal) $11$-point weighted spherical $(3,3)$-designs for $\RR^3$ 
appears to have infinitely many points. 
\item A generic numerical design on it has no symmetry properties, with none of the norms 
$\norm{v_j}$ repeated.
\item The Reznick design has projective symmetry group of order $2$ (exchange the $x$ and $y$ coordinates),
and three different norms taken by $1,2,8$ of the vectors.
\end{itemize}


\subsection{A new $11$-point $(3,3)$-design for $\RR^3$}
\label{new11pointdesign}

The search for numerical $11$-point $(3,3)$-designs for $\RR^3$,
with the condition that two of the vectors have equal norms,
yielded the Reznick design (which appears to be unique) and also, 
frequently, a design with two sets of five vectors with equal norm.
This new design  
has symmetry group the dihedral group $D_5$. 

The projection of each set of five vectors onto the orthogonal complement of
the other single vector gave sets of five equally spaced lines in $\RR^2$,
exactly the same up to a scalar multiple. Thus we came the conjectured 
analytic form of such a design:
\begin{equation}
\label{Vconjform}
V=\pmat{a_1E & a_2E & 0\cr b_1e &-b_2e&-b_3}, \qquad
E=[\pmat{\cos({2\pi\over 5}k)\cr\sin({2\pi\over 5}k)}]_{0\le k\le 4}, \quad
e=[1]_{0\le k\le 4}, 
\end{equation}
where, numerically,
$$ a_1\approx 0.972824, \quad b_1\approx 0.172322, 
\quad a_2\approx 0.736481, \quad b_2\approx 0.692954,
\quad b_3\approx 1.003311, $$
with the normalisation
\begin{equation}
\label{newdesignnormalisation}
5(a_1^2+b_1^2+a_2^2+b_2^2)+b_3^2=11.
\end{equation}
From this assumed structural form, 
by substituting into the sum of squares formula
(\ref{sumofsquareformula}), 
we deduce
the necessary and sufficient conditions for such a design
$$ 5b_1^6+5b_2^6+b_3^6={25\over16}(a_1^6+a_2^6), 
\qquad
a_1^6+a_2^6 = 6(a_1^4b_1^2+a_2^4b_2^2)
= 8(a_1^2b_1^4+a_2^2b_2^4). $$
A fourth more complicated equation is given by the variational condition
$f_{3,3,11}(V)=0$.
In the computer algebra package Maple, we attempted to solve
these four equations for four of the variables $a_1,a_2,b_1,b_2,b_3$, 
with the other as a parameter.
This yields some solutions with complex entries, some
which are real but not numerically correct, and some which are numerically
correct -- often with very complicated formulas. 
With $b_2$ as the free parameter, we eventually came to
$$ {a_1\over b_2} 
= (3051 - 297\sqrt{105})^{1\over6}, \qquad
{a_2\over b_2} 
= { 3\sqrt{5}-\sqrt{21} \over2}
=\left( { 32373- 3159\sqrt{105}\over2}\right)^{1\over6}, $$
$$ {b_1\over b_2} = \left({ 135311 -13205 \sqrt{105}\over 64}\right)^{1\over6},
\qquad
{b_3\over b_2} = \left({ 1246875-121625 \sqrt{105}\over 64}\right)^{1\over6}. $$
On putting these ratios with $b_2$ (presented as sixth roots) into Maple, 
the variational inequality and the sum of squares formula are seen to hold, 
with the sums of the $6$-th powers of the $11$ inner products with $(x,y,z)$ giving
$$  {675\over32} \bigl(1425-139\sqrt{105}\bigr) b_2^6\, (x^2+y^2+z^2)^3. $$
To obtain a neat formula for this design, with $(1425-139\sqrt{105}) b_2^6$ rational, 
we choose
$$ b_2 = \bigl(1425+139\sqrt{105}\bigr)^{1\over6} $$
to get
$$ a_1= \bigl(12960 + 864\sqrt{105}\bigr)^{1\over6}, \quad
a_2= \bigl(12960 - 864\sqrt{105}\bigr)^{1\over6}, $$
$$
b_1= \bigl(1425-139\sqrt{105}\bigr)^{1\over6}, \quad
b_2= \bigl(1425+139\sqrt{105}\bigr)^{1\over6}, \quad
b_3= 26250^{1\over6}. $$
This gives an $11$-point $(3,3)$-design for $\RR^3$ of the form 
(\ref{Vconjform}), which does not
satisfy the normalisation (\ref{newdesignnormalisation}).
The corresponding sum of squares can be written
$$ \sum_{j=0}^1\sum_{k=0}^4 \bigl( \ga_j (c_k x+s_k y) + \gb_j z \bigr)^6
+26250 z^6 = 40500(x^2+y^2+z^2)^3, $$
where $c_k=\cos({2\pi\over 5}k)$, $s_k=\sin({2\pi\over 5}k)$, and
$$ \ga_j=\bigl(12960+(-1)^j 864\sqrt{105}\bigr)^{1\over6}, \qquad
\gb_j=(-1)^j\bigl(1425-(-1)^j 139 \sqrt{105}\bigr)^{1\over6}. $$


This design and the Reznick design appear to be singular points on the
algebraic variety of such designs. It would interesting to study this
variety further, e.g., finding nice points on it (those giving designs with 
large symmetry groups), rational points, or explicitly giving an infinite 
family of these designs.

\subsection{The Reznick/Stroud spherical $(2,2)$-designs for $\RR^4,\RR^5,\RR^6$}
\label{Rez(2,2)-designs}


The equation (9.27)(i) of \cite{R92} is
\begin{align*} 192 (x_1^2+x_2^2+x_3^2+x_4^2)^2 
&= 6(x_1+x_2+x_3+x_4)^4 + \Sigma^4 (3x_1-x_2-x_3-x_4)^4\cr
&\quad +\Sigma^{6}( (1+\sqrt{2})x_1+(1+\sqrt{2})x_2+(1-\sqrt{2})x_3+(1-\sqrt{2})x_4)^4,
\end{align*}
where $\Sigma^4 (3x_1-x_2-x_3-x_4)^4$ stands for the $4$ terms obtained by making a
permutation of the variables $x_1,x_2,x_3,x_4$ in $(3x_1-x_2-x_3-x_4)$, etc.
Therefore
$$ V=\pmat{\sqrt[4]{6}&3&-1&-1&-1&a&a&a&b&b&b\cr
\sqrt[4]{6}&-1&3&-1&-1&a&b&b&a&a&b \cr
\sqrt[4]{6}&-1&-1&3&-1&b&a&b&a&b&a \cr
\sqrt[4]{6}&-1&-1&-1&3&b&b&a&b&a&a}, 
\quad a=1+\sqrt{2}, \ b=1-\sqrt{2},
$$
is an $11$-point spherical $(2,2)$-design for $\RR^4$. 
This is the first in a family of three optimal spherical 
$(2,2)$-designs for $\Rd$, $d=4,5,6$ that can be obtained 
from Stroud's \cite{S71} antipodal cubature rules of degree $5$ ($5$-designs) 
for the unit sphere, given by
$$C(x_1^2+\cdots+x_d^2)^2 = a_1(\Sigma x_j)^4+\Sigma^d(a_2\Sigma x_j+a_3 x_1)^4
+\Sigma^{{d\choose2}}(a_4\Sigma x_j+a_5(x_1+x_2))^4, $$
$$ g=(8-d)^{1\over4}, \quad
a_1=8(g^4-1)(g^2\pm\sqrt{2})^4, \quad
a_2=2g^2\pm2\sqrt{2}, $$
$$ a_3=\mp2\sqrt{2}g^4-8g^2, \quad a_4=2g, \quad a_5=\mp2\sqrt{2}g^3-8g,
\quad C=3a_5^4. $$
The corresponding vectors are
$$ V=[a_1^{1\over4} u,\{a_2u+a_3e_j\}_{1\le j\le d},\{a_4u+a_5(e_j+e_k)\}_{1\le j<k\le d}],
\qquad u=e_1+\cdots+e_d. $$

Our numerical search shows that for $d=4$ there is an infinite family of designs,
and for $d=5,6$ there is a unique design. The unique designs can be 
obtained as a union of two orbits (sizes $6,10$ and $6,16$ respectively) 
\cite{MW20}.



\subsection{Kempner's $24$-point spherical $(3,3)$-design for $\RR^4$}

With the $\pm$ independent of each other and the previous notation, Kempner 1912 gives
\begin{equation}
\label{Kempnerformula}
120 (x_1^2+x_2^2+x_3^2+x_4^2)^3
= \Sigma^4 (2x_1)^6+8\Sigma^{12}(x_1\pm x_2)^6
+\Sigma^8 (x_1\pm x_2\pm x_3\pm x_4)^6. 
\end{equation}
The vectors in the corresponding design have equal norms.

\begin{example}
\label{Kempnerexample}
Our search for equal-norm 
$24$-point spherical $(3,3)$-designs for $\RR^4$ gave an
infinite family, with repeated angles $0$ ($60$ times),
 ${1\over\sqrt{2}}$ ($32$ times) and $21$ other angles (each $4$ times). 
The exact example given by
(\ref{Kempnerformula}) has just three angles 
$0$ ($108$ times) ${1\over\sqrt{2}}$ ($72$ times)
and ${1\over2}$ ($96$ times), and is the orbit of three vectors
under the natural action of $S_4$. 
This design can also be obtained
as the orbit of two vectors under the action of the real reflection group
$W(F_4)$ (Shephard-Todd number $28$) \cite{MW20}.

Interestingly, our search for equal-norm
$23$-point spherical $(3,3)$-designs for $\RR^4$ seemed to give a unique 
configuration, with repeated angles.
\end{example}


\subsection{Kotelina and Pevnyi's $120$-point 
$(3,3)$-design for $\RR^8$}
 
A similar formula to (\ref{Kempnerformula}) is given in
\cite{KP11}, which leads to what appears to be the unique optimal 
$120$-vector $(3,3)$-design for $\RR^8$.

\begin{example}
\label{Kotelinaexample}
Our numerical search for equal-norm spherical $(3,3)$-designs for $\RR^8$ 
gave an optimal design of $120$ points, with repeated angles
$0$ ($3780$ times) and ${1\over2}$ ($3360$ times). This is easily
recognised to be the design of \cite{KP11}. This is an example of the
special situation, see Figure \ref{special-situation_3_8}. The next 
value of $n$ for which there is a design is $n=250$.
\end{example}

Intuitively, one would expect that for fixed $t$, the numbers
$n_e=n_e(d)$ and $n_w=n_w(d)$ of vectors in optimal 
equal-norm and weighted $(t,t)$-designs in $\Rd$
should be increasing functions of $d$. The above example for 
$t=3$ is so exceptional that it provides a counter example
(see Table \ref{real(t,t)-designs}), i.e.,
$$ n_e(7)= 158, \qquad n_e(8)= 120, \qquad n_e(9)= 380. $$
Nevertheless, we expect $d\mapsto n_e(d)$ and 
$d\mapsto n_w(d)$ to be asymptotically increasing.


\begin{figure}[ht]
\centering
\includegraphics[width=7.5cm]{\figures 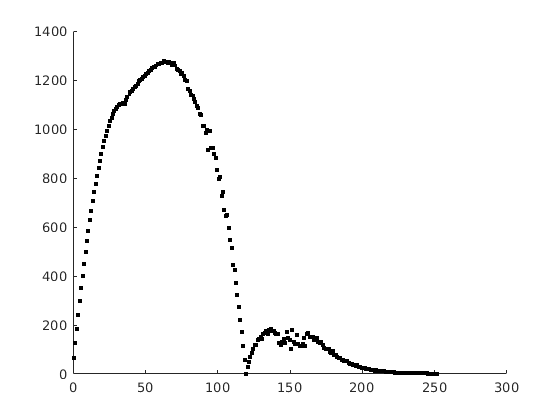}
\includegraphics[width=7.5cm]{\figures 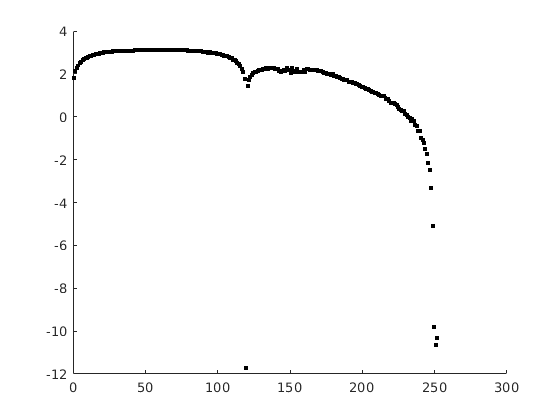}
\caption{Special situation: 
the graphs $n\mapsto f_{t,d,n}$ and $n\mapsto \log_{10} f_{t,d,,n}$
for $t=3$, $d=8$.}
\label{special-situation_3_8}       
\end{figure}

\section{Conclusion}

We used the {\tt manopt} optimisation software to considerably
enlarge the list of putatively optimal spherical $(t,t)$-designs
(see Table \ref{real(t,t)-designs}). 

The generic situation seems to be 

\begin{itemize}
\item The algebraic variety of optimal equal-norm and weighted spherical 
$(t,t)$-designs for $\Rd$ has positive dimension. A typical 
(numerical) element in the variety has little or no structure/symmetry, 
though there may be such points on the variety. It is characterised 
by a significant ``jump'' down in the error $f_{t,d,n}$ to numerical
zero from configurations with fewer points.
\end{itemize}
A highlight is the first full geometric description of the algebraic variety
of optimal designs in the generic case of positive dimension 
(Theorem \ref{isoclinicMercedes}).
The new $11$-point $(3,3)$-design for $\RR^3$ of $\S\ref{new11pointdesign}$ 
is also of particular interest.

There are also special situations 
(previously the only explicit examples known)
where 


\begin{itemize}
\item The algebraic variety consists of single point, or a finite set of points.
These designs have a high degree of structure/symmetry, which may lead
to explicit constructions (as group orbits).
\end{itemize}

Some other observations about our methods that may be of future use are:

\begin{itemize}
\item The special situation is not always detected by a single calculation
(though it might be indicated by the graph of the minimum values obtained),
and so could be missed by methods which do only one calculation for a 
given value of $n$. 
\item In the generic situation, the jump value of $n$ is not
always detected by one calculation, e.g., for
unweighted $(t,d,n)= (4,5,101),(6,4,116)$
and weighted $(2,9,45),(6,4,154),(8,3,78)$. 
Because of this, we suggest constructing
several numerical designs of $n-1$ points, where $n$ is the presumed jump
(optimal value).
\item Adaptive methods could be used to find larger numerical designs,
e.g., instead of calculating $f_{t,d,n}$ for consecutive values of $n$
until numerical zero is found, a bisection method could be used to find
the ``jump'', or an interval 
in which it lies. 
\item The methods outlined apply to a wide class of configurations,
and could for example be applied to the Game of Sloanes, 
for which the optimal solutions are strictly speaking not an algebraic
variety.
\item We came to no firm conclusions about the existence of local minimisers 
which are not optimal designs (for $t>1$), when an optimal design exists.
\end{itemize}

\section{Acknowledgements}

Thanks to Josiah Park for several very useful discussions, 
and for alerting us to the work of Rezick \cite{R92}.
Thanks to Dustin Mixon for suggesting the manopt software.



\bibliographystyle{alpha}
\bibliography{references}

\newcommand{\etalchar}[1]{$^{#1}$}
\begin{thebibliography}{BGM{\etalchar{+}}22}

\bibitem[ACFW18]{ACFW18}
Marcus Appleby, Tuan-Yow Chien, Steven Flammia, and Shayne Waldron.
\newblock Constructing exact symmetric informationally complete measurements
  from numerical solutions.
\newblock {\em J. Phys. A}, 51(16):165302, 40, 2018.

\bibitem[BGM{\etalchar{+}}22]{BGMPV19}
Dmitriy Bilyk, Alexey Glazyrin, Ryan Matzke, Josiah Park, and Oleksandr
  Vlasiuk.
\newblock Optimal measures for {$p$}-frame energies on spheres.
\newblock {\em Rev. Mat. Iberoam.}, 38(4):1129--1160, 2022.

\bibitem[BMAS14]{manopt}
N.~Boumal, B.~Mishra, P.-A. Absil, and R.~Sepulchre.
\newblock {M}anopt, a {M}atlab toolbox for optimization on manifolds.
\newblock {\em Journal of Machine Learning Research}, 15(42):1455--1459, 2014.

\bibitem[CMS17]{CMS17}
Jameson Cahill, Dustin~G. Mixon, and Nate Strawn.
\newblock Connectivity and irreducibility of algebraic varieties of finite unit
  norm tight frames.
\newblock {\em SIAM J. Appl. Algebra Geom.}, 1(1):38--72, 2017.

\bibitem[CW16]{CW15}
Tuan-Yow Chien and Shayne Waldron.
\newblock A characterization of projective unitary equivalence of finite frames
  and applications.
\newblock {\em SIAM J. Discrete Math.}, 30(2):976--994, 2016.

\bibitem[CW18]{CW18}
Tuan-Yow Chien and Shayne Waldron.
\newblock The projective symmetry group of a finite frame.
\newblock {\em New Zealand J. Math.}, 48:55--81, 2018.

\bibitem[Elz20]{E20}
Alex Elzenaar.
\newblock Numerical construction of spherical $(t,t)$--designs.
\newblock Summer Scholarship Project, University of Auckland, 1 2020.

\bibitem[ET06]{ET06}
B.~Et-Taoui.
\newblock Equi-isoclinic planes of {E}uclidean spaces.
\newblock {\em Indag. Math. (N.S.)}, 17(2):205--219, 2006.

\bibitem[EW22]{EW22}
Alex Elzenaar and Shayne Waldron.
\newblock A repository of spherical $(t,t)$-designs, doi:
  10.5281/zenodo.6443357.
\newblock DOI: 10.5281/zenodo.6443357, 2022.

\bibitem[FJKM18]{FJKM18}
Matthew Fickus, John Jasper, Emily~J. King, and Dustin~G. Mixon.
\newblock Equiangular tight frames that contain regular simplices.
\newblock {\em Linear Algebra Appl.}, 555:98--138, 2018.

\bibitem[Hog82]{H82}
S.~G. Hoggar.
\newblock {$t$}-designs in projective spaces.
\newblock {\em European J. Combin.}, 3(3):233--254, 1982.

\bibitem[HS96]{HS96}
R.~H. Hardin and N.~J.~A. Sloane.
\newblock Mc{L}aren's improved snub cube and other new spherical designs in
  three dimensions.
\newblock {\em Discrete Comput. Geom.}, 15(4):429--441, 1996.

\bibitem[HW20]{HW20}
Daniel Hughes and Shayne Waldron.
\newblock Spherical half-designs of high order.
\newblock {\em Involve}, 13(2):193--203, 2020.

\bibitem[HW21]{HW21}
Daniel Hughes and Shayne Waldron.
\newblock Spherical {$(t,t)$}-designs with a small number of vectors.
\newblock {\em Linear Algebra Appl.}, 608:84--106, 2021.

\bibitem[JKM19]{JKM19}
John {Jasper}, Emily~J. {King}, and Dustin~G. {Mixon}.
\newblock {Game of Sloanes: best known packings in complex projective space}.
\newblock In {\em Wavelets and Sparsity XVIII}, volume 11138 of {\em Society of
  Photo-Optical Instrumentation Engineers (SPIE) Conference Series}, page
  111381E, September 2019.

\bibitem[KP11]{KP11}
N.~O. Kotelina and A.~B. Pevnyi.
\newblock The {V}enkov inequality with weights and weighted spherical
  half-designs.
\newblock {\em J. Math. Sci. (N. Y.)}, 173(6):674--682, 2011.
\newblock Problems in mathematical analysis. No. 55.

\bibitem[LS73]{LS73}
P.~W.~H. Lemmens and J.~J. Seidel.
\newblock Equi-isoclinic subspaces of {E}uclidean spaces.
\newblock {\em Nederl. Akad. Wetensch. Proc. Ser. A {\bf 76}=Indag. Math.},
  35:98--107, 1973.

\bibitem[MW19]{MW20}
Mozhgan Mohammadpour and Shayne Waldron.
\newblock Constructing high order spherical designs as a union of two of lower
  order, 2019.

\bibitem[Rez92]{R92}
Bruce Reznick.
\newblock Sums of even powers of real linear forms.
\newblock {\em Mem. Amer. Math. Soc.}, 96(463):viii+155, 1992.

\bibitem[SG10]{SG09}
A.~J. Scott and M.~Grassl.
\newblock Symmetric informationally complete positive-operator-valued measures:
  a new computer study.
\newblock {\em J. Math. Phys.}, 51(4):042203, 16, 2010.

\bibitem[Str71]{S71}
A.~H. Stroud.
\newblock {\em Approximate calculation of multiple integrals}.
\newblock Prentice-Hall, Inc., Englewood Cliffs, N.J., 1971.
\newblock Prentice-Hall Series in Automatic Computation.

\bibitem[Wal17]{W17}
Shayne Waldron.
\newblock A sharpening of the {W}elch bounds and the existence of real and
  complex spherical {$t$}-designs.
\newblock {\em IEEE Trans. Inform. Theory}, 63(11):6849--6857, 2017.

\bibitem[Wal18]{W18}
Shayne F.~D. Waldron.
\newblock {\em An introduction to finite tight frames}.
\newblock Applied and Numerical Harmonic Analysis. Birkh\"auser/Springer, New
  York, 2018.

\end{thebibliography}
\nocite{*}
 
\vfil
\end{document}

\section{An $11$-point $(3,3)$-design for $\RR^3$}

Here is how I found an $11$-point $(3,3)$-design in $\RR^3$ with
symmetry group $D_5$.

In our program for finding numerical designs, we put in the condition
that two of the vectors have equal norms. This yielded the Reznick design
(which appears to be unique) and also, frequently, a design with two sets
of five vectors with equal norm. Numerically putting the Gramian into 
the {\tt FrameSymmetry} program gave a symmetry group $C_2$. With hindsight,
the design is not a tight frame, and so I should have used the Gramian of
the associated tight frame, in which case I would have got symmetry group
the dihedral group $D_5$. I did heaps of playing around, and eventually observed that the
projection of the sets of five vectors onto the orthogonal complement of
the other single vector gave sets of five equally spaced lines in $\RR^2$,
exactly the same up to a scalar multiple. Thus I came the conjectured form
of such a design:
$$ V=\pmat{a_1E & a_2E & 0\cr b_1e &-b_2e&-b_3}, \qquad
E=[\pmat{\cos({2\pi\over 5}k)\cr\sin({2\pi\over 5}k)}]_{0\le k\le 4}, \quad
e=[1]_{0\le k\le 4}, 
$$
where, numerically
$$ a_1\approx 0.972824, \quad b_1\approx 0.172322, 
\quad a_2\approx 0.736481, \quad b_2\approx 0.692954,
\quad b_3\approx 1.003311, $$
with the normalisation
$$ 5(a_1^2+b_1^2+a_2^2+b_2^2)+b_3^2=11. $$

From this structural form, in maple, 
by substituting into the sum of squares formula,
we deduce
the necessary and sufficient conditions for such a design
$$ 5b_1^6+5b_2^6+b_3^6={25\over16}(a_1^6+a_2^6), 
\qquad
a_1^6+a_2^6 = 6(a_1^4b_1^2+a_2^4b_2^2)
= 8(a_1^2b_1^4+a_2^2b_2^4). $$
A fourth more complicated equation is given by the variational condition.
Taking these four equations, one can attempt to solve in maple for 
four of the variables $a_1,a_2,b_1,b_2,b_3$, with the other as a parameter.
This can be done, and yields some solutions with complex entries, some
which are real but not numerically correct, and some which are numerically
correct -- often with very complicated formulas. With $b_2$ as the free parameter
we eventually came to
$$ {a_1\over b_2} = \ga = (3051 - 297\sqrt{105})^{1\over6}, $$
$$ {a_2\over b_2} = \gb = { 3\sqrt{5}-\sqrt{21} \over2}
= 2{\sqrt{6}\over\sqrt{11+\sqrt{105}}}
=\left( { 32373- 3159\sqrt{105}\over2}\right)^{1\over6}, $$
$$ {b_1\over b_2} = \left({ 135311 -13205 \sqrt{105}\over 64}\right)^{1\over6}, $$
$$ {b_3\over b_2} = \left({ 1246875-121625 \sqrt{105}\over 64}\right)^{1\over6}. $$
On putting these ratios with $b_2$ presented as sixth roots into maple, 
the variational inequality and the sum of squares formula are seen to hold, 
with the sums of the $6$ powers of the inner products with $(x,y,z)$ giving
$$  \hbox{${93825\over 32} \bigl({1425\over 139}-\sqrt{105}\bigr) (x^2+y^2+z^2)^3 b_2^6$}. $$
To obtain a neat formula for this design, with $({1425\over 139}-\sqrt{105}) b_2^6$ rational, 
we choose
$$ b_2 = \bigl(1425+139\sqrt{105}\bigr)^{1\over6} $$
to get
$$ a_1= \bigl(12960 + 864\sqrt{105}\bigr)^{1\over6}, \quad
a_2= \bigl(12960 - 864\sqrt{105}\bigr)^{1\over6}, $$
$$
b_1= \bigl(1425-139\sqrt{105}\bigr)^{1\over6}, \quad
b_2= \bigl(1425+139\sqrt{105}\bigr)^{1\over6}, \quad
b_3= 26250^{1\over6}. $$
Thus:

\begin{example} We have
$$ \sum_{j=0}^1\sum_{k=0}^4 \bigl( \ga_j (c_k x+s_k y) + \gb_j z \bigr)^6
+26250 z^6 = 40500(x^2+y^2+z^2)^3, $$
where $c_k=\cos({2\pi\over 5}k)$, $s_k=\sin({2\pi\over 5}k)$, and
$$ \ga_j=\bigl(12960+(-1)^j 864\sqrt{105}\bigr)^{1\over6}, \qquad
\gb_j=(-1)^j\bigl(1425-(-1)^j 139 \sqrt{105}\bigr)^{1\over6}. $$
\end{example}

In the course of these computations, we also came to
$$ a_1a_2 = 6\, b_1b_2, $$
and numerically observed that
$$ a_1^{5}b_1 = a_2^5 b_2, $$
which also holds exactly. Given these, one of the original equations can be ``split''
$$ a_1^6 = 6a_2^4b_2^2, \qquad a_2^6 = 6a_1^4b_1^2. $$
There is also the splitting
$$ b_1^6+b_2^6 = {95\over864}(a_1^6+a_2^6), \qquad
 b_3^6 = {875\over864}(a_1^6+a_2^6). $$
Maple can solve these four ``split'' equations.

I believe this design and the Reznick design must be singular points on the
algebraic variety of such designs.

\section{$72$ point design}

\begin{example}
\label{isoclinicsubspacesdesigndetailed}
 The search for equal-norm $(5,5)$-designs for $\RR^4$
(see Figure \ref{special-situation}) provided two examples of the
special situation: a unique putatively optimal design of $60$ points,
and ones with $72$ points. We now describe the $72$-point designs, which
appear to be part of an infinite family.
Each numerical design has projective symmetry
group $\ZZ_6$, and consists of $12$ orbits of size $6$. 
Since all irreducible representations of $\ZZ_6$ are ordinary, 
each orbit is a real cyclic harmonic frame (see \cite{W18}). 
These harmonic frames are all seen to be the same:
the angles are ${\sqrt{3}\over2}$ ($6$ times), ${1\over2}$ ($6$ times)
and $0$ ($3$ times). This is the harmonic frame in $\RR^2$ given
by $\{1,2\}\in\ZZ_6$ (see \cite{CW18}), equivalently, $6$ equally
spaced lines, which is an optimal $(5,5)$-design for $\RR^2$.
The projective symmetry group for a single one of these harmonic 
frames is the dihedral group of order $12$, which interestingly 
is also the symmetry group of a union of two, but not three, of them.

The $12$ two-dimensional subspaces of $\RR^4$ appear to have a unique 
geometric configuration. We say that subspaces with orthogonal 
projections $P_j$ and $P_k$ are {\bf isoclinic} with {\it angle} $\gs$ 
if (\ref{isoclinicdefn}) holds. 
Each subspace is orthogonal to one other, i.e., $\gs=0$, and makes the following
angles with the other ten
$$  \gs_1^2={5+\sqrt{5}\over10}\approx0.72361\quad\hbox{($5$ times)},
\qquad \gs_2^2={5-\sqrt{5}\over10}\approx0.27640\quad\hbox{($5$ times)}.  $$
Now we consider the mechanics of the variational equality. This
can be done blockwise. The sums of inner products corresponding
to any two harmonic frames with the same angle appear to be equal. 
Thus, we have equality in the form 
$$ 12(\hbox{diagonal blocks}) + 12 (\hbox{zero blocks})
+ 60 (\hbox{$\gs_1$ blocks}) +60 (\hbox{$\gs_2$ blocks})
= {1\cdot 3\cdot 5\cdot 7\cdot 9\over 4\cdot 6\cdot 8\cdot 10\cdot 12}
72^2, $$
where
$$ (\hbox{diagonal blocks}) = \hbox{$6+2\{
 6({\sqrt{3}\over2})^{10}+6({1\over2})^{10}\}={567\over 64}$}. $$
and numerically we have
$$ (\hbox{$\gs_1$ blocks}) = {567\over 64}\gs_1^{10} \approx 1.7575847485, \qquad
   (\hbox{$\gs_2$ blocks}) = {567\over 64}\gs_2^{10} \approx 0.014290251522. $$
We also observe that the cross Gramians for vectors in two subspaces
are circulant ${\rm circ}(a_1,\ldots,a_6)$, with the sum for the blocks
given by $6(a_1^{10}+\cdots+a_6^{10})$.
By considering the three graphs with edges for each of the three possible angles,
I deduced a (transitive) symmetry group of order $12)$ isomorphic to 
$C_2\times A_5$, given by the permutations
$$(2, 10)(4, 7)(5, 12)(6, 9), \qquad
    (2, 4)(3, 5)(6, 12)(10, 11), \qquad
    (1, 2, 9, 8, 12, 7)(3, 5, 6, 11, 10, 4). $$
I can ``create'' the corresponding unitary matrices mapping planes to planes. 
There appear to be two (related choices), which when taken to the order of
the permutation give $\pm I$.

Moreover, I can create the projection matrices, with two of the orthogonal subspaces
taken to be $\spam\{e_1,e_2\}$ and $\spam\{e_3,e_4\}$. The entries for $2\times 2$ blocks
with zeros I know, and I also can guess the norms of the columns. There are maybe $5$ or
so basic numbers used to create the projection matrices, but I can't guess any of these,
as yet.
\end{example}

\section{Remarks, questions, etc on the other paper}

The basic idea is that
\begin{itemize}
\item The set of real spherical $(t,t)$-designs ($t$-designs, etc) is
a real algebraic variety given by $f_{t,d,n}=0$.
\item At some point $n$ the variety becomes nontrivial, and its 
points are the minimal designs. 
We give examples when this variety consists of isolated points 
(is dimension 0), and when there is a ``continuous family'' of 
designs.
\item The Newton polytope of $f_{t,d,n}$ (the convex hull of
the indices for the supporting monomials) is computed explicitly.
\end{itemize}


\vfil

\end{document}

\begin{itemize}
\item Spherical $(t,t)$-designs for large $t$ require a large number of
points, and so the error (which involves a difference) becomes
increasingly difficult to calculate accurately.
\item For large $n$ (relative to $t$), a random set of $n$ points is close to being
a $(t,t)$-design, and hence has a small error. This can make it difficult
to identify $(t,t)$-designs. Ideally, at the point where an optimal $(t,t)$-design
is obtained the error should ``jump down'' to numerical zero.
\end{itemize}

\begin{itemize}
\item We used the default solver options, except for the 
{\tt delta\textunderscore bar} parameter, where
setting {\tt problem.delta\textunderscore bar} 
to {\tt problem.M.typicaldist()/10}, rather than the default 
{\tt problem.M.typicaldist()} gave better results.
\end{itemize}

The Gramian for a weighted spherical $(2,2)$-design 
of $16$-vectors for $\RR^5$
is given by the Gram matrix $\gL Q\gL$, where (it took me ages to 
write down $Q$)
\setcounter{MaxMatrixCols}{20}
$$ Q=\pmat{
1&{1\over5}&{1\over\sqrt{5}}&-{1\over\sqrt{5}}&{-1\over\sqrt{5}}&
{1\over5}&{1\over5}&{-1\over\sqrt{5}}&{1\over\sqrt{5}}&{-1\over5}&
{-1\over\sqrt{5}}&{1\over\sqrt{5}}&{1\over\sqrt{5}}&{-1\over\sqrt{5}}&
{1\over\sqrt{5}}&{1\over5} \cr
{1\over5}&1&{-1\over\sqrt{5}}&{-1\over\sqrt{5}}&{-1\over\sqrt{5}}&
{-1\over5}&{-1\over5}&{1\over\sqrt{5}}&{-1\over\sqrt{5}}&{1\over5}&
{-1\over\sqrt{5}}&{-1\over\sqrt{5}}&{1\over\sqrt{5}}&{-1\over\sqrt{5}}&
{1\over\sqrt{5}}&{-1\over5} \cr
{1\over\sqrt{5}}&{-1\over\sqrt{5}}&1&{1\over3}&{-1\over3}&{-1\over\sqrt{5}}&
{1\over\sqrt{5}}&{-1\over3}&{1\over3}&{-1\over\sqrt{5}}&{-1\over3}&
{1\over3}&{-1\over3}&{1\over3}&{1\over3}&{1\over\sqrt{5}} \cr
{-1\over\sqrt{5}}&{-1\over\sqrt{5}}&{1\over3}&1&{1\over3}&{-1\over\sqrt{5}}&
{-1\over\sqrt{5}}&{1\over3}&{1\over3}&{-1\over\sqrt{5}}&{1\over3}&
{-1\over3}&{-1\over3}&{1\over3}&{1\over3}&{1\over\sqrt{5}} \cr
{-1\over\sqrt{5}}&{-1\over\sqrt{5}}&{-1\over3}&{1\over3}&1&{1\over\sqrt{5}}&
{-1\over\sqrt{5}}&{-1\over3}&{1\over3}&{-1\over\sqrt{5}}&{1\over3}&
{-1\over3}&{1\over3}&{1\over3}&{-1\over3}&{-1\over\sqrt{5}} \cr
{1\over5}&{-1\over5}&{-1\over\sqrt{5}}&{-1\over\sqrt{5}}&{1\over\sqrt{5}}&
1&{-1\over5}&{-1\over\sqrt{5}}&{1\over\sqrt{5}}&{1\over5}&{1\over\sqrt{5}}&
{1\over\sqrt{5}}&{1\over\sqrt{5}}&{-1\over\sqrt{5}}&{-1\over\sqrt{5}}&
{-1\over5} \cr
{1\over5}&{-1\over5}&{1\over\sqrt{5}}&{-1\over\sqrt{5}}&{-1\over\sqrt{5}}&
{-1\over5}&1&-{1\over\sqrt{5}}&{-1\over\sqrt{5}}&{1\over5}&
{-1\over\sqrt{5}}&{1\over\sqrt{5}}&{-1\over\sqrt{5}}&{1\over\sqrt{5}}&
{-1\over\sqrt{5}}&{-1\over5} \cr
{-1\over\sqrt{5}}&{1\over\sqrt{5}}&{-1\over3}&{1\over3}&{-1\over3}&
{-1\over\sqrt{5}}&{-1\over\sqrt{5}}&1&{-1\over3}&{1\over\sqrt{5}}&
{1\over3}&{-1\over3}&{-1\over3}&{-1\over3}&{1\over3}&{1\over\sqrt{5}} \cr
{1\over\sqrt{5}}&{-1\over\sqrt{5}}&{1\over3}&{1\over3}&{1\over3}&
{1\over\sqrt{5}}&{-1\over\sqrt{5}}&{-1\over3}&1&{-1\over\sqrt{5}}&
{1\over3}&{1\over3}&{1\over3}&{-1\over3}&{1\over3}&{1\over\sqrt{5}} \cr
{-1\over5}&{1\over5}&{-1\over\sqrt{5}}&{-1\over\sqrt{5}}&{-1\over\sqrt{5}}&
{1\over5}&{1\over5}&{1\over\sqrt{5}}&{-1\over\sqrt{5}}&1&{1\over\sqrt{5}}&
{1\over\sqrt{5}}&{-1\over\sqrt{5}}&{-1\over\sqrt{5}}&{-1\over\sqrt{5}}&
{1\over5} \cr
{-1\over\sqrt{5}}&{-1\over\sqrt{5}}&{-1\over3}&{1\over3}&{1\over3}&
{1\over\sqrt{5}}&{-1\over\sqrt{5}}&{1\over3}&{1\over3}&{1\over\sqrt{5}}&
1&{1\over3}&{-1\over3}&{-1\over3}&{-1\over3}&{1\over\sqrt{5}} \cr
{1\over\sqrt{5}}&{-1\over\sqrt{5}}&{1\over3}&{-1\over3}&{-1\over3}&
{1\over\sqrt{5}}&{1\over\sqrt{5}}&{-1\over3}&{1\over3}&{1\over\sqrt{5}}&
{1\over3}&1&{-1\over3}&{-1\over3}&{-1\over3}&{1\over\sqrt{5}} \cr
{1\over\sqrt{5}}&{1\over\sqrt{5}}&{-1\over3}&{-1\over3}&{1\over3}&
{1\over\sqrt{5}}&{-1\over\sqrt{5}}&{-1\over3}&{1\over3}&{-1\over\sqrt{5}}&
{-1\over3}&{-1\over3}&1&{-1\over3}&{1\over3}&{-1\over\sqrt{5}} \cr
{-1\over\sqrt{5}}&{-1\over\sqrt{5}}&{1\over3}&{1\over3}&{1\over3}&
{-1\over\sqrt{5}}&{1\over\sqrt{5}}&{-1\over3}&{-1\over3}&{-1\over\sqrt{5}}&
{-1\over3}&{-1\over3}&{-1\over3}&1&{-1\over3}&{-1\over\sqrt{5}} \cr
{1\over\sqrt{5}}&{1\over\sqrt{5}}&{1\over3}&{1\over3}&{-1\over3}&
{-1\over\sqrt{5}}&{-1\over\sqrt{5}}&{1\over3}&{1\over3}&{-1\over\sqrt{5}}&
{-1\over3}&{-1\over3}&{1\over3}&{-1\over3}&1&{1\over\sqrt{5}} \cr
{1\over5}&{-1\over5}&{1\over\sqrt{5}}&{1\over\sqrt{5}}&{-1\over\sqrt{5}}&
{-1\over5}&{-1\over5}&{1\over\sqrt{5}}&{1\over\sqrt{5}}&{1\over5}&
{1\over\sqrt{5}}&{1\over\sqrt{5}}&{-1\over\sqrt{5}}&{-1\over\sqrt{5}}&
{1\over\sqrt{5}}&1
} $$
$$ \gL :=\diag(a,a,b,b,b,a,a,b,b,a,b,b,b,b,b,a), \qquad
a:=\left({20\over21}\right)^{1\over 4}, \quad
b:=\sqrt[4]{36\over35}. $$